\date{}
\theoremstyle{plain}
\newtheorem{theorem}{Theorem}[section]
\newtheorem{corollary}[theorem]{Corollary}
\newtheorem{lemma}[theorem]{Lemma}
\newtheorem{proposition}[theorem]{Proposition}
\theoremstyle{definition}
\newtheorem{definition}[theorem]{Definition}
\newtheorem{remark}[theorem]{Remark}
\title{Regularized Learning for Fractional Brownian Motion via Path Signatures}
\author{$\mathrm{Ali \ Mohaddes}^\mathrm{1},  
	\  \mathrm{Francesco \ Iafrate}^\mathrm{1},
	\  \mathrm{Johannes \ Lederer}^\mathrm{1}$\\  
	$^\mathrm{1}$\small{\emph{University of Hamburg, Hamburg, Germany}}
}
\begin{document}
	\maketitle

	\begin{abstract}
		Fractional Brownian motion (fBm) extends classical Brownian motion by introducing dependence between increments, governed by the Hurst parameter 
		$H\in (0,1)$. Unlike traditional Brownian motion, the increments of an fBm are not independent. Paths generated by fractional Brownian motions can exhibit significant irregularity, particularly when the Hurst parameter is small.
		As a result, classical regression methods may not perform effectively. Signatures, defined as iterated path integrals of continuous and discrete-time processes, offer a universal nonlinearity property that simplifies the challenge of feature selection in time series data analysis by effectively linearizing it. Consequently, we employ Lasso regression techniques for regularization when handling irregular data. 
		To evaluate the performance of signature Lasso on fractional Brownian motion (fBM), we study its consistency when the Hurst parameter $ H \ne \frac{1}{2} $. This involves deriving bounds on the first and second moments of the signature. For the case $ H > \frac{1}{2} $, we use the signature defined in the Young sense, while for $ H < \frac{1}{2} $, we use the Stratonovich interpretation. 
		 Simulation results indicate that signature Lasso can outperform traditional regression methods for synthetic data as well as for real-world datasets.
		\hspace{0.9cm}\\
		\emph{Keywords}: Fractional Brownian Motion, Rough path, Lasso Regression, Hurst parameter, Time series
	\end{abstract}

	\section{Introduction}\label{sec1}
	
	Fractional Brownian motion generalizes standard Brownian motion by  considering processes with dependent increments. The trajectories produced by fractional Brownian motions often exhibit significant irregularity. This irregularity can challenge the effectiveness of conventional regression techniques. In order to overcome this, signatures, defined as iterated path integrals, offer a robust representation of nonlinear functionals that streamlines feature selection in time series analysis by linearizing the model. Signatures are especially advantageous for examining irregular trajectories. In this paper, we explore regression techniques tailored for data generated by a fractional Brownian motion, utilizing the concept of path signatures.
	
	\textbf{Fractional Brownian Motions.}
	A fractional Brownian motion (fBm) $ B = \{B (t), t \in \mathbb{R}\} $  is a zero mean Gaussian process, defined on a complete probability space $ (\Omega, \mathcal{F}, P) $.
	A $d$-dimensional fractional Brownian motion (fBm), denotes by $\boldsymbol{B}_t = (B_t^1, \ldots, B_t^d)$, with Hurst parameter $H \in (0,1)$. This process is a zero-mean Gaussian process with independent components, each characterized by the covariance function:
	\[
	\mathbb{E}[B_t^i B_s^i] = \frac{1}{2} \left( t^{2H} + s^{2H} - |t - s|^{2H} \right), \quad s, t \in \mathbb{R}_+.
	\]
	
	For $H = \frac{1}{2}$, this corresponds to standard Brownian motion. For any $H \in (0,1)$, the variance of the increments is given by:
	\[
	\mathbb{E}\left[ (B_t^i - B_s^i)^2 \right] = (t - s)^{2H}, \quad 0\le s<t\le T, \; i = 1, \ldots, d.
	\]

	For $ H < \frac{1}{2} $, each component $ B_t^i $ of $\boldsymbol{B}_t  $ can be written as
	\begin{equation}\label{fBmRepresentation}
		B_t^i = \int_{\mathbb{R}} K(t, u)\, dW_u^i, \quad t \geq 0,
	\end{equation}
	where $\boldsymbol{W}_t  = (W^1_t, \ldots, W^d_t) $ is a $ d $-dimensional Wiener process, and the Volterra-type kernel $ K(t,u) $ is defined on $ \mathbb{R}_+ \times \mathbb{R}_+ $ by
	\begin{align}
		K(t,u) = c_H &\left[ \Big( \frac{u}{t} \Big)^{1/2 - H} (t - u)^{H - 1/2} \right. \notag \\
		&\left. + \big( \frac{1}{2} - H \big) u^{1/2 - H} \int_u^t v^{H - 3/2}(v - u)^{H - 1/2} dv \right] \mathrm{1}_{\{0 < u < t\}},
	\end{align}
 where the normalizing constant $ c_H $ is is a constant depending on $ H $
	\[
	c_H = \bigg( \frac{2H \Gamma\big( \frac{3-2H}{2} \big)}{\Gamma(2-2H) \Gamma(H + \frac{1}{2})} \bigg)^{\frac{1}{2}},
	\]
	where $ \Gamma $ is the Gamma function \citep{nualart2006}. 
	In general, the model can capture three types of processes. When $H > \frac{1}{2}$, the process increments are positively correlated, indicating persistence or long memory. This means consecutive increments tend to have the same sign, and a random step in one direction is likely to be followed by another step in the same direction. When $0 < H < \frac{1}{2}$, the increments are negatively correlated. In this case, a random step in one direction is typically followed by a step in the opposite direction. 
	When $H = \frac{1}{2}$, the process follows standard Brownian motion, where the increments are independent. Fractional Brownian motions lack regularity. The sample paths of fBm are almost surely $\gamma$-H\"older continuous for all $\gamma < H$, which underlines its role as a canonical example in rough path theory. Within rough path theory, the signature serves as a core construct that captures the essential information of a path.

	\textbf{Signature of a Path.} The signature transform, introduced by Chen \citep{Chen1957}
	and later expanded by Lyons \citep{lyons2007differential}, 
	has become a foundational tool in time series analysis. The signature of a time series is a vector of real-valued features capturing relevant information. 
	Consider a  $d$-dimensional continuous-time stochastic process $\boldsymbol{X}_t = (X_t^1, X_t^2, \dots, X_t^d)^\top \in \mathbb{R}^d$, defined over the time interval $0 \leq t \leq T$ on a filtered probability space $\left(\Omega, \mathcal{F}, \left\{\mathcal{F}_t\right\}, \mathbb{P} \right)$. The signature, or signature transform, of this process is defined as follows:
	
	\begin{definition}[Signature] For $k \geq 1$ and indices $i_1, \dots, i_k \in \{1, 2, \dots, d\}$, the $k$-th order signature component of the process $\boldsymbol{X}$ from time 0 to $t$ is given by \begin{equation} \label{equ: def_of_signature} S_{{i_1,\dots,i_k}}(\boldsymbol{X})_{t} = \int_{0<t_1<\dots<t_k<t} \mathrm{d} X_{t_1}^{i_1} \cdots \mathrm{d} X_{t_k}^{i_k}, \quad 0 \leq t \leq T. \end{equation} The $0$-th order signature component of $\boldsymbol{X}$ from time $0$ to $t$ is defined as $S_0(\boldsymbol{X})_{t}= 1$ for any $0 \leq t \leq T$. The signature of $X$ is the collection of all its signature components, and the signature of $\boldsymbol{X}$ with orders truncated to $K$ consists of all components with orders no greater than $K$.
	\end{definition}
	The $k$-th order signature component of $\boldsymbol{X}$, as defined in \eqref{equ: def_of_signature}, represents the $k$-fold iterated path integral along the indices $i_1, \dots, i_k$. For any given order $k$, there are $d^k$ possible index combinations $I = (i_1, \dots, i_k)$, so the total number of $k$-th order signature components is $d^k$. Signature components can be computed recursively as: 
	\begin{equation*} S_{I}(\boldsymbol{X})_{t} = S_{i_1,\dots,i_k}(\boldsymbol{X})_{t} = \int_{0<t_1<\dots<t_k<t} \mathrm{d} X_{t_1}^{i_1} \cdots \mathrm{d} X_{t_k}^{i_k} = \int_{0<s<t} S_{i_1,\dots,i_{k-1}}(\boldsymbol{X})_{s} \mathrm{d} X_{s}^{i_k}. 
	\end{equation*}
	
When  $ H > \frac{1}{2} $, the paths Holder continuity ($ \alpha < H $) allows to apply Young integration. Let $X$ be $\alpha$-Holder continuous, and let $Y$ be $\beta$-Holder continuous and let $\pi^n = \{0 = t_0^n < t_1^n < \cdots < t_{N_n}^n = T\}, \, n \geq 1$, be a sequence of partitions with vanishing mesh size. Young integration is defined as 
\begin{equation*}
	\int_0^T Y_s \, dX_s := \lim_{n \to \infty} \sum_{i=0}^{N_n - 1} Y_{u_i^n}(X_{t_{i+1}^n} - X_{t_i^n})
\end{equation*}

	Young integration converges for fBm with $ p < \frac{1}{H} $ when $ H > \frac{1}{2} $ \citep{young1936}. For $ H < \frac{1}{2} $, fBms trajectories are  more irregular paths and we cannot anymore apply Young integration.  Therefore we need to apply alternative integration definition. In this paper we apply Stratonovich integration, defined as
\[
\int_0^T Y_s \circ \mathrm{d}X_s = \lim_{n \to \infty} \sum_{i=0}^{N_n - 1} \frac{1}{2} \left( Y_{t_i^n} + Y_{t_{i+1}^n} \right) \left( X_{t_{i+1}^n} - X_{t_i^n} \right). 
\]
For simplicity in this paper we use $\int_0^T Y_s  \mathrm{d}X_s $ to denote Young or Stratonovich integration. Another important tools in theory of rough paths is \emph{shuffle product} operator.
The shuffle product operator, which is defined as a way of combining iterated integrals by interleaving their indices while preserving order within each integral, is a valuable tool that simplifies computations involving signatures.

\begin{definition}
Let $\mathrm{Shuffles}(k, m)$ denote the set of all $(k, m)$-shuffles. Consider two multi-indexes $I_k = (i_1, \ldots, i_k)$ and $J_m = (j_1, \ldots, j_m)$ with $i_1, \ldots, i_k, j_1, \ldots, j_m \in \{1, \ldots, d\}$. Define the multi-index
\[
(r_1, \ldots, r_k, r_{k+1}, \ldots, r_{k+m}) = (i_1, \ldots, i_k, j_1, \ldots, j_m).
\]
The shuffle product of $I$ and $J$, denoted $I \shuffle J$, is a finite multi-set of multi-indexes of length $k + m$ defined by
\[
I_k \shuffle J_m = \{ (r_{\tau(1)}, \ldots, r_{\tau(k+m)}) \mid \tau \in \mathrm{Shuffles}(k, m) \}.
\]

\end{definition}

	\begin{definition}[Shuffle Product for Signatures]
		For a path $\boldsymbol{X}$ and multi-indices $ I_p = (i_1, \ldots, i_p) $ and $ J_p = (j_1, \ldots, j_q) $, where $i_r , j_r \in \{1,\dots,d\}$ we have
		\[
		S_I(\boldsymbol{X})  S_J(\boldsymbol{X}) = \sum_{K \in {I_p \shuffle J_q}} S_K(\boldsymbol{X}),
		\]
		where $ {I_p \shuffle J_q} $ denotes shuffles preserving the order of $ \{i_1, \ldots, i_p\} $ and $ \{j_1, \ldots, j_q\} $. For example, $ S_{1}(\boldsymbol{X}) \cdot S_{2}(\boldsymbol{X}) = S_{1,2}(\boldsymbol{X}) + S_{2,1}(\boldsymbol{X}) $.   
	\end{definition}

One important advantage of employing Stratonovich integration is its compatiblity with the shuffle product.  Iterated Stratonovich integrals, under pointwise multiplication, form a shuffle algebra, and allowing us to leverage results from shuffle algebra theory \cite{gaines1995basis}. 

Statistical estimation for fBm has received a lot of attention. See e.g. \citep{berzin2007estimating} for the estimation of the Hurst parameter, or  \citep{xiao2011parameter} for parametric estimation in the case of an Ornstein-Uhlenbeck (OU) process driven by an fBm and \citep{mishura2008stochastic} for a general overview.
Our approach is different in the sense that we directly approach the problem of approximating functionals of the trajectories. This is possible by means of signature methods. 

	One of the signature's most notable theoretical properties is its universal nonlinearity: any continuous function of a time series, whether linear or nonlinear, can be approximated arbitrarily well using a linear combination of its signature components. Such  linearity property gives the signature an edge over neural-network-based nonlinear methods \citep{guo2023consistency}.
	Empirical evidence highlights its advantages, including the absence of complex neural architecture design requirements.
	The signature has demonstrated strong performance in various fields such as machine learning, finance, and medical prediction \citep{lyons2014feature,lyons2022signature,kormilitzin2017detecting}.
	Additionally, few studies explore its statistical aspects beyond probabilistic characteristics \citep{fermanian2022functional, guo2023consistency}.

	\section{Lasso regression for fractional Brownian motions}
	As stated above, the key theoretical property of the signature is its universal nonlinearity. The following theorem formally expresses this property.
	
	\begin{theorem}[Universal nonlinearity, {\citep[Theorem 2.12]{cuchiero2023signature}}]\label{th:UN}
		Let $\boldsymbol{X}_t$ be a continuous $\mathbb{R}^d$-valued and $\mathcal{S}$ be a compact subset of paths of the time-augmented process $\tilde{\boldsymbol{X}}_t = \begin{pmatrix}
			t , {\boldsymbol{X}}_t^\top 
		\end{pmatrix}^\top$ from time 0 to $T$. 
		Assume that $f: \mathcal{S} \to \mathbb{R}$ is a real-valued continuous function. Then, for any $\varepsilon>0$, there exists a linear functional $L: \mathbb{R}^\infty \to \mathbb{R}$ such that
		\begin{equation*}
			\sup_{s \in \mathcal{S}}\left| f (s) - L(\mathrm{Sig}(s)) \right| < \varepsilon,
		\end{equation*}
		where $\mathrm{Sig}(s)$ is the signature of $s$. 
	\end{theorem}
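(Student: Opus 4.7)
The plan is to reduce the statement to the Stone-Weierstrass theorem applied to the algebra of linear functionals of the signature, viewed as continuous real-valued functions on the compact set $\mathcal{S}$. Concretely, I would let
\[
\mathcal{A} := \bigl\{\, L \circ \mathrm{Sig} : L \text{ is a linear functional on } \mathbb{R}^\infty \text{ with finite support} \,\bigr\} \subset C(\mathcal{S}, \mathbb{R}),
\]
i.e.\ the set of all finite linear combinations of signature components $s \mapsto S_I(s)_T$. Continuity of each $S_I(\cdot)_T$ on $\mathcal{S}$ follows from standard continuity of Young/Stratonovich iterated integrals with respect to the path in the appropriate $\alpha$-H\"older topology, so $\mathcal{A}$ indeed lies in $C(\mathcal{S}, \mathbb{R})$.

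Next I would verify the three Stone-Weierstrass hypotheses for $\mathcal{A}$. First, $\mathcal{A}$ is a linear subspace by construction. Second, $\mathcal{A}$ contains the constants, because $S_0(\tilde{\boldsymbol{X}})_T = 1$ and every constant is a multiple of this component. Third, and crucially, $\mathcal{A}$ is closed under pointwise multiplication: given two linear functionals $L_1, L_2$ of the signature, their product is a finite sum of terms of the form $S_I(\tilde{\boldsymbol{X}})_T\, S_J(\tilde{\boldsymbol{X}})_T$, and the shuffle product identity stated in the excerpt,
\[
S_I(\tilde{\boldsymbol{X}})_T\, S_J(\tilde{\boldsymbol{X}})_T = \sum_{K \in I \shuffle J} S_K(\tilde{\boldsymbol{X}})_T,
\]
rewrites each such product as a linear combination of higher-order signature components. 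Thus $L_1 \cdot L_2 \in \mathcal{A}$, so $\mathcal{A}$ is a subalgebra.

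The remaining and most delicate hypothesis is point separation: I must show that for any two distinct paths $s, s' \in \mathcal{S}$ there exists some $S_I$ with $S_I(s)_T \neq S_I(s')_T$. Here the time augmentation is essential. By the Hambly-Lyons uniqueness theorem, the signature characterizes a path up to tree-like equivalence; but the augmented path $\tilde{\boldsymbol{X}}_t = (t, \boldsymbol{X}_t^\top)^\top$ has a strictly monotone first coordinate, which rules out tree-like cancellations and forces two augmented paths with identical signatures to be equal. Therefore distinct $s, s' \in \mathcal{S}$ give distinct signatures, so some component must differ and $\mathcal{A}$ separates points.

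With these three properties established, the Stone-Weierstrass theorem implies that $\mathcal{A}$ is dense in $C(\mathcal{S}, \mathbb{R})$ under the uniform norm, which is exactly the conclusion: for every continuous $f$ and every $\varepsilon > 0$ there exists a finite-support linear functional $L$ on $\mathbb{R}^\infty$ with $\sup_{s \in \mathcal{S}} |f(s) - L(\mathrm{Sig}(s))| < \varepsilon$. The main obstacle is invoking the Hambly-Lyons uniqueness result correctly and observing that augmenting by time eliminates the tree-like equivalence ambiguity; the algebraic steps are routine consequences of the shuffle identity.
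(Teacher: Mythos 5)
Your proposal is correct and follows exactly the route the paper indicates: the paper cites this as Theorem 2.12 of Cuchiero et al.\ and notes only that it ``follows from the Stone--Weierstrass theorem,'' which is precisely the argument you carry out in detail (shuffle product for the subalgebra property, $S_0 = 1$ for constants, and time augmentation plus signature uniqueness for point separation). No discrepancy to report.
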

	This Theorem follows from the Stone-Weierstrass theorem. The classical Weierstrass approximation theorem asserts that any real-valued continuous function on a closed interval can be uniformly approximated by a polynomial. Therefore, linear forms on the signature can be considered analogous to polynomial functions for paths.

	\subsection{Signature Lasso regression}
	
	Let $\left(\boldsymbol{X}_1, y_1\right), \ldots, \left(\boldsymbol{X}_N, y_N\right)$, denote $N$ pairs of samples, where~$\boldsymbol{X}_n=\left\{\boldsymbol{X}_{n,t}, \,0\leq t \leq T \right\}$ represents the $n$-th path realization of $\boldsymbol{X}_t$ for $n = 1, 2, \ldots, N$. 
	Motivated by the universal nonlinearity, for a fixed order $K \geq 1$, assume that each pair $\left(\boldsymbol{X}_n, y_n\right)$ follows the linear regression model given below \citep{guo2023consistency}:
	\begin{equation}\label{SignatureModel}
		y_n=\beta_0+\sum_{i_1=1}^d \beta_{i_1} S_{i_1}\left(\tilde{\boldsymbol{X}}_n\right)_T+\sum_{i_1, i_2=1}^d \beta_{i_1, i_2} S_{i_1, i_2}\left(\tilde{\boldsymbol{X}}_n\right)_T+\cdots+\sum_{i_1, \ldots, i_K=1}^d \beta_{i_1, \ldots, i_K} S_{i_1, \ldots, i_K}\left(\tilde{\boldsymbol{X}}_n\right)_T+\varepsilon_n,   
	\end{equation}
	where $ \left\{\varepsilon_n\right\}_{n=1}^N $ are independent and identically distributed random errors, assumed to follow a zero mean normal distribution. The term $ {S}\left(\tilde{\boldsymbol{X}}_n\right) $ denotes the signature of the augmented process $\tilde{\boldsymbol{X}}$. The total number of predictors, which are the signature components across different orders, is  
	$p_{d, K} = (d^{K+1} - 1)/(d - 1)$ if $d>1$, 
	including the $0$th-order signature component $ S_0(\boldsymbol{X})_T = 1 $, with its coefficient denoted by $ \beta_0 $. 
	
	Given that the number of parameters grows exponentially with the truncation level, we are in a high dimensional regime with $p_{d, K} \gg N$, potentially. 
	We then pursue a sparse representation using Lasso regression \citep{Tibshirani1996}, a widely used method for learning sparse models. This paper aims to investigate the statistical properties of signature representations in Lasso regression for fractional Brownian motions.
	With a tuning parameter $ \lambda > 0 $ and $ N $ data points, the Lasso estimator selects the true predictors through
	\begin{align}\label{SignatureLasso}
		\hat{\boldsymbol{\beta}}^N(\lambda)=\arg \min _{{\boldsymbol{\beta}}}\Bigg[\sum _ { n = 1 } ^ { N } \bigg(y_n-{\beta}_0 & -\sum_{i_1=1}^d {\beta}_{i_1} {S}_{i_1}\left(\tilde{\boldsymbol{X}}_n\right)_T-\sum_{i_1, i_2=1}^d {\beta}_{i_1, i_2} {S}_{i_1, i_2}\left(\tilde{\boldsymbol{X}}_n\right)_T-\ldots \\
		& -\sum_{i_1, \ldots, i_K=1}^d {\beta}_{i_1, \ldots, i_K} {S}_{i_1, \ldots, i_K}\left(\tilde{\boldsymbol{X}}_n\right)_T\bigg)^2+\lambda\|{\boldsymbol{\beta}}\|_1 \Bigg], \notag
	\end{align}
	where $ \hat{\boldsymbol{\beta}} $ is the vector comprising all coefficients $ \hat{\beta}_{i_1, \ldots, i_k} $. 
	
	Our objective is to examine the consistency of feature selection for fractional Brownian motion processes using the signature method with the Lasso estimator described in equation (\ref{SignatureLasso}). Generally, consistency refers to the convergence of the Lasso estimator to the true coefficient values as the sample size grows. To investigate the consistency, we need to analyze the covariance between signature terms \cite{guo2023consistency}. In the following section, we bound the first and second moment of signature terms associated with fractional Brownian motion.

	\section{Upper bound for covariance of signature terms}
	
	In this section, we determine an upper bound for the covariance between signature terms. Since our approach varies depending on whether $H>1/2$ 
	or $H<1/2 $, we present the methodology in the subsequent subsections.
	
	\subsection{Covariance when $H>1/2$}

	If $p+q$ be an odd number then according to \cite{baudoin2007operators} this expectation of the above integral is equal to zero if $p+q = 2k=n$ then we have the below theorem.
	
	\begin{theorem}[Upper bound for the second moment of fBm signature]\label{UpperBoundYoung}
		Consider a fractional Brownian motion with the Hurst parameter  $ H > \frac{1}{2} $. 
		Let $S_{I_p}(\boldsymbol{B}_{t})_{st}$ with $I_p = \{i_1,\dots,i_p \}$ and $S_{I_p}(\boldsymbol{B}_{t})_{st}$ with $J_q = \{j_1,\dots,j_q \}$ denote $p$th and $q$th order integrals of a $d$-dimensional fractional Brownian motion process and let  $p+q=2k=n$. Let  $0 \le s <  t \le T$,  
		 then
		\[
		\mathbb{E}[S_{I_p}(\boldsymbol{B}_{t})_{st}S_{J_q}(\boldsymbol{B}_{t})_{st}] \leq \frac{2^{2k}}{k!} (t-s)^{2kH}.
		\]

	\end{theorem}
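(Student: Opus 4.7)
The plan is to reduce the covariance on the left-hand side to a sum of expectations of single iterated integrals via the shuffle identity, count the number of terms, and then bound each one separately using the Gaussian/Wick structure of fractional Brownian motion. The factor $2^{2k}$ in the theorem will come from the combinatorics of the shuffle, while the factor $(t-s)^{2kH}/k!$ will come from the per-word expectation bound.

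First, since $H>1/2$ the signature components are Young integrals and therefore form a shuffle algebra, so by the shuffle product identity recalled earlier,
\[
\mathbb{E}\!\left[S_{I_p}(\boldsymbol{B})_{st}\,S_{J_q}(\boldsymbol{B})_{st}\right] \;=\; \sum_{K \in I_p \shuffle J_q} \mathbb{E}\!\left[S_K(\boldsymbol{B})_{st}\right].
\]
Every $K$ in the shuffle has length $p+q=2k$, and the number of such words is $\binom{p+q}{p}=\binom{2k}{p}\le 2^{2k}$, which is exactly the combinatorial prefactor appearing in the claim. It therefore suffices to prove the per-word estimate
\[
\left|\mathbb{E}\!\left[S_K(\boldsymbol{B})_{st}\right]\right| \;\le\; \frac{(t-s)^{2kH}}{k!}
\]
for any multi-index $K$ of length $2k$.

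For this per-word bound I would use that $\boldsymbol{B}$ is centred Gaussian with independent components, so by Isserlis' theorem $\mathbb{E}[S_K(\boldsymbol{B})_{st}]$ is a finite sum, over perfect matchings $\sigma$ of $\{1,\dots,2k\}$ consistent with the coordinate labels in $K$, of simplex integrals of the form
\[
\int_{s<t_1<\cdots<t_{2k}<t}\prod_{\{a,b\}\in\sigma} R'(t_a,t_b)\,dt_1\cdots dt_{2k},
\]
where $R'(u,v)=H(2H-1)|u-v|^{2H-2}$ is the mixed second derivative of the fBm covariance; for $H>1/2$ this kernel is locally integrable, so the expression is well defined in the Young sense. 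These are exactly the objects estimated by Baudoin and Coutin in \cite{baudoin2007operators}, and the stated bound follows from their Beta/Gamma evaluation of the iterated simplex integral together with the count of admissible pairings, which collapse into the clean factor $(t-s)^{2kH}/k!$. Combining with the shuffle step then yields
\[
\mathbb{E}\!\left[S_{I_p}(\boldsymbol{B})_{st}\,S_{J_q}(\boldsymbol{B})_{st}\right] \;\le\; \binom{2k}{p}\,\frac{(t-s)^{2kH}}{k!} \;\le\; \frac{2^{2k}}{k!}(t-s)^{2kH}.
\]

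The main obstacle is the per-word step. Controlling the simplex integrals against the singular kernel $|u-v|^{2H-2}$ tightly enough that the sum over all $(2k-1)!!$ Isserlis pairings collapses to the single factor $1/k!$ is the delicate part, and it is precisely where the sharp moment estimates of \cite{baudoin2007operators} are needed; any looser bound would leave an extra combinatorial factor that would no longer be absorbed by $2^{2k}$ after multiplication by $\binom{2k}{p}$.
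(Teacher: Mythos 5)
Your proposal follows essentially the same route as the paper's proof: reduce the product to a sum of single-word expectations via the shuffle identity, bound the number of shuffle words by $\binom{2k}{p}\le 2^{2k}$, and control each word of length $2k$ through the Gaussian pairing (Isserlis/Wick) formula applied to the kernel $H(2H-1)|u-v|^{2H-2}$. The only difference is that you defer the per-word estimate to \cite{baudoin2007operators}, whereas the paper carries out that computation explicitly -- symmetrizing the simplex integral to a cube integral and evaluating $\int_0^1\int_0^1|u-v|^{2H-2}\,du\,dv=1/(H(2H-1))$, whose powers cancel the Wick prefactor to leave $1/(k!\,2^k)$ per word -- so your asserted per-word bound $(t-s)^{2kH}/k!$ is indeed what that computation delivers and the argument is sound.
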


	\subsection{Covariance when $H<1/2$}
	
		When $H < \frac{1}{2}$, we consider a family of rough paths associated with a $d$-dimensional fractional Brownian motion (fBm), based on the representation given in Equation~(\ref{fBmRepresentation}). For $n = 1$, the first level is given by
	\[
	S_{I_1} (B_t)_{st}  = B_t^i - B_s^i,
	\]
	and for $2 \le n \le \lfloor 1/H \rfloor$, it is extended to higher-order iterated integrals constructed over $\boldsymbol{B}_t$ as 
	\begin{equation*}
		S_{I_n}(\boldsymbol{B}_t)_{st}  = \sum_{j=1}^{n} (-1)^{j-1} \int_{A_j^n} \prod_{l=1}^{j-1} K(s, u_l)[K(t, u_j) - K(s, u_j)] \prod_{l=j+1}^{n} K(t, u_l) \, dW_{u_1}^{i_1} \cdots dW_{u_n}^{i_n},
	\end{equation*}
	where integration is understood in the Stratonovich sense and $A_j^n$ is the subset of $[0,t]^n$ defined by
	\[
	A_j^n = \{(u_1, \ldots, u_n) \in [0,t]^n \colon u_j = \min(u_1, \ldots, u_n),\ u_1 > \cdots > u_{j-1} \text{ and } u_{j+1} < \cdots < u_n\}.
	\]
	Sometimes for simplicity we write above equation as
	
	\begin{equation}\label{RoughPathTerms}
		S_{I_n}(\boldsymbol{B}_t)_{st} = \sum_{j=1}^{n} \hat{S}^j_{I_n}({\boldsymbol{B}_t})_{st}.
	\end{equation}
	
Then we will have the following bounds for the signature moments.
	
	\begin{lemma}[Upper bound for first moment of fBm signature]\label{SignatureFirstMoment}Consider a fractional Brownian motion with the Hurst parameter  $ H < \frac{1}{2} $. 
		Let $S_{I_n}(\boldsymbol{B}_{st})$ with $I_n = \{i_1,\dots,i_n \}$ 
		For fBm when $n$ is odd then 
		\[
		\mathbb{E}[S_{I_n}(\boldsymbol{B}_t)_{st}] = 0.
		\]
		Otherwise for $n=2k$
	\[
\mathbb{E}[S_{I_n}(\boldsymbol{B}_{t})_{st}]  \le \frac{{\beta_{{k,H}}}}{k!H^k} (t - s)^{nH}
\]
where
\begin{equation*}\label{BoundIntA}
	{\beta_{{k,H}}}= \frac{\pi (\frac{1}{2}-H)}{\cos(\pi H)} + \frac{1}{1-2kH} 
\end{equation*}
	\end{lemma}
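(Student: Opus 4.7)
The plan is to use the Volterra representation \eqref{fBmRepresentation} to rewrite $S_{I_n}(\boldsymbol{B}_t)_{st}$ as a Stratonovich iterated integral of a deterministic kernel against the driving Brownian motion $\boldsymbol{W}$, and then evaluate the expectation by Wick contraction. Since $S_{I_n}(\boldsymbol{B}_t)_{st}$ is a homogeneous polynomial of degree $n$ in the centered Gaussian variables $\{W^i_u\}$, parity immediately yields $\mathbb{E}[S_{I_n}(\boldsymbol{B}_t)_{st}] = 0$ when $n$ is odd, which settles the first half of the statement.

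For $n = 2k$ I would take expectations term by term in the decomposition $S_{I_n}(\boldsymbol{B}_t)_{st} = \sum_{j=1}^n \hat{S}^j_{I_n}(\boldsymbol{B}_t)_{st}$ from \eqref{RoughPathTerms}. Each $\hat S^j_{I_n}$ is a Stratonovich multiple integral of the deterministic kernel
$f_j(u_1,\ldots,u_n) = \prod_{l<j} K(s,u_l)\,[K(t,u_j)-K(s,u_j)]\,\prod_{l>j} K(t,u_l)$
against $dW^{i_1}\cdots dW^{i_n}$ over $A_j^n$. Applying the Hu--Meyer/It\^o--Stratonovich expansion, only the fully contracted term contributes to the expectation, since any nontrivial multiple Wiener--It\^o integral is mean-zero. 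Consequently $\mathbb{E}[\hat S^j_{I_n}(\boldsymbol{B}_t)_{st}]$ becomes a sum over complete pairings $\pi$ of $\{1,\ldots,n\}$ with matched indices ($i_a = i_b$ for each pair $(a,b)\in\pi$) of ordinary integrals of $f_j$ restricted to the diagonals $\{u_a = u_b\}$.

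To bound these diagonal integrals I would invoke the two standard identities for the Volterra kernel,
\begin{equation*}
\int_0^{s\wedge t} K(t,u)K(s,u)\,du = \tfrac12\bigl(t^{2H}+s^{2H}-|t-s|^{2H}\bigr), \qquad \int_0^t \bigl(K(t,u)-K(s,u)\mathrm{1}_{\{u<s\}}\bigr)^2\,du = (t-s)^{2H}.
\end{equation*}
Exactly one pair of $\pi$ contains the distinguished index $j$ and therefore carries the difference $K(t,\cdot)-K(s,\cdot)$, producing the factor $(t-s)^{2H}$; the remaining $k-1$ pairs each contribute a factor of the same order by Cauchy--Schwarz against the squared-kernel identity. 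After pairing, the simplex structure of $A_j^n$ collapses to a $k$-dimensional ordered domain, yielding the combinatorial denominator $k!$. Elementary changes of variable $u = sv$ and $v = u/t$ in the remaining one-dimensional integrals reduce their constants to beta integrals: the reflection identity $\Gamma(H+\tfrac12)\Gamma(\tfrac12-H) = \pi/\cos(\pi H)$ yields $\pi(\tfrac12-H)/\cos(\pi H)$, accounting for the first summand of $\beta_{k,H}$, while a boundary monomial integral of the form $\int_0^1 v^{-2kH}\,dv = 1/(1-2kH)$, finite because $n\leq\lfloor 1/H\rfloor$ forces $2kH<1$, accounts for the second.

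The hard part is not the odd case or the Wick reduction but the combinatorial bookkeeping. The pairings over matched indices, the sum $\sum_{j=1}^n$ with alternating signs $(-1)^{j-1}$ in \eqref{RoughPathTerms}, and the nested simplex constraints of $A_j^n$ must be arranged so that the powers of $(t-s)^{2H}$, the factorial $1/k!$, the denominator $1/H^k$, and the beta-type constants combine precisely into the advertised bound $\beta_{k,H}(t-s)^{nH}/(k!\,H^k)$ rather than any looser estimate.
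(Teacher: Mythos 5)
Your overall skeleton matches the paper's: both reduce the Stratonovich iterated integral to its fully contracted (deterministic) part via the It\^o--Stratonovich/Hu--Meyer decomposition, observe that every term carrying a nontrivial multiple Wiener--It\^o integral is mean-zero, and then estimate the surviving deterministic integral. Your parity argument for odd $n$ is fine and in fact more elementary than the paper's (which gets the odd case from the absence of an all-$2$s sequence in $D_n^k$). Note also that only \emph{adjacent} contractions survive on the simplex, so your ``sum over all complete pairings'' collapses to the single pairing $(1,2),(3,4),\dots,(2k-1,2k)$; you should say this, since otherwise the combinatorial count is off.

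The genuine gap is in the quantitative estimate for $n=2k$. The surviving term is a $k$-fold Lebesgue integral over the simplex $0<u_1<\cdots<u_k<s$ of $\bigl[K(t,u_1)-K(s,u_1)\bigr]K(\tau_2,u_1)\prod_{h=2}^{k}K(\tau_{2h-1},u_h)K(\tau_{2h},u_h)$, in which the variables are coupled by the ordering; it does not factor into $k$ independent one-dimensional integrals, so the covariance identities $\int K(t,u)K(s,u)\,du=\tfrac12(t^{2H}+s^{2H}-|t-s|^{2H})$ and $\int\bigl(K(t,u)-K(s,u)\mathrm{1}_{\{u<s\}}\bigr)^2\,du=(t-s)^{2H}$ cannot be applied pair by pair as you propose. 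Moreover the distinguished factor is the \emph{product} $[K(t,u_1)-K(s,u_1)]K(\tau_2,u_1)$, not the square of the difference, so the second identity does not apply to it even in isolation. The paper instead bounds each kernel pointwise by $|K(t,u)|\le c_H\bigl[(t-u)^{H-1/2}+u^{H-1/2}\bigr]$ (Lemma \ref{KernelBound1}), integrates the inner $k-1$ variables first using the simplex-to-cube symmetrization of Remark \ref{SimplexCubeSymmetricFunc} to produce the factors $1/(k-1)!$ and $(2/H)^{k-1}$ together with a residual power $(s-u_1)^{2(k-1)H}$, and only then treats the $u_1$-integral by the substitution $y=(s-u_1)/(t-s)$, which yields precisely the integral $\beta_A$ of Equation (\ref{IntA}); the constant $\beta_{k,H}$ arises because the kernel difference in $u_1$ is coupled to the accumulated power $y^{2(k-1)H}$ from the inner integrations. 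Your factorized picture misses this coupling, which is exactly where the $k$-dependence of $\beta_{k,H}$ and the $1/H^{k}$ come from, and you explicitly defer this bookkeeping rather than carrying it out, so the stated bound is not actually established.
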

	
	\begin{proof}
		Proof is deferred to Appendix.
	\end{proof}

	\begin{proposition}[Upper bound for second moment of fBm signature]\label{SignatureExpectation}
		For $n \leq \left\lfloor \frac{1}{H} \right\rfloor$, 
		let $S_{I_n}(\boldsymbol{B}_{st})$ with $I_n = \{i_1,\dots,i_n \}$ denote $n$th order integrals of a $d$-dimensional  fractional Brownian motion process. Then for $0 \le s <  t \le T$, we have
		\begin{equation}
	\mathbb{E}[|S_{I_n}(\boldsymbol{B}_{t})_{st}|^2] \leq \frac{2^{4n}}{n!H^n}   (n^2+\frac{2^{2n}n^5 \beta_{n,H}}{H^n}) \times (t - s)^{2nH }
		\end{equation} 
where $\beta_{n,H} = \frac{\pi (\frac{1}{2}-H)}{\cos(\pi H)} + \frac{1}{1-2nH} $.

	\end{proposition}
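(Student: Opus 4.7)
The plan is to reduce the bound on the full signature moment to separate bounds on each of the $n$ summands $\hat{S}^j_{I_n}$ in the decomposition \eqref{RoughPathTerms}, and then control each summand via a Stratonovich-to-It\^o conversion followed by It\^o isometry on the resulting iterated Wiener integrals.

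First, applying the Cauchy--Schwarz inequality to the sum $S_{I_n}(\boldsymbol{B}_t)_{st} = \sum_{j=1}^n \hat{S}^j_{I_n}(\boldsymbol{B}_t)_{st}$ gives
\begin{equation*}
\mathbb{E}\bigl[|S_{I_n}(\boldsymbol{B}_t)_{st}|^2\bigr] \leq n\sum_{j=1}^n \mathbb{E}\bigl[|\hat{S}^j_{I_n}(\boldsymbol{B}_t)_{st}|^2\bigr],
\end{equation*}
which reduces the problem to bounding one Stratonovich iterated Wiener integral at a time. Each such integral I would rewrite, via the standard Hu--Meyer-type identity, as a genuine It\^o iterated integral of order $n$ plus a finite sum of correction terms obtained by contracting pairs of repeated Brownian indices, yielding iterated integrals of strictly smaller order.

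For the leading It\^o part, It\^o's isometry equates the second moment with the $L^2$-integral over $A_j^n$ of the squared kernel product appearing in the definition of $\hat{S}^j_{I_n}$. Using the identity $\int_0^t K(t,u)^2\,du = t^{2H}$, the cross-kernel estimate $\int_0^{s\wedge t} K(s,u)K(t,u)\,du \leq \tfrac{1}{2}(s^{2H}+t^{2H}-(t-s)^{2H})$, and the increment bound $\int_0^s [K(t,u)-K(s,u)]^2\,du \leq (t-s)^{2H}$ at the minimum index $u_j$, together with the simplex volume $1/n!$ on $A_j^n$, yields a bound of order $\tfrac{2^{4n}}{n!H^n}(t-s)^{2nH}$; summing over $j$ and incorporating the outer factor $n$ from Cauchy--Schwarz absorbs the $n^2$ summand inside the bracket. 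For the correction terms, each contraction collapses two Brownian indices into a kernel trace and produces an iterated integral of order $n-2$ whose expectation is already controlled by Lemma~\ref{SignatureFirstMoment}. Counting contractions ($\binom{n}{2}$ per reduction step, iterated up to $n/2$ times) together with a Jensen/Cauchy--Schwarz step needed to pass from a first-moment estimate to a second-moment estimate produces the polynomial factor $n^5$, while the bound from Lemma~\ref{SignatureFirstMoment} contributes the factor $\beta_{n,H}$ and an extra $H^{-n}$; the $2^{2n}$ arises as a uniform upper bound on the number of sign and index configurations in the contraction sum.

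The main obstacle will be the combinatorial bookkeeping for the Stratonovich-to-It\^o reduction: one must enumerate all contraction patterns on the multi-index $I_n$, verify that each reduced integral inherits a scaling compatible with $(t-s)^{2(n-2m)H}$ after $m$ pairings so that, after multiplying by the $m$-fold trace kernel integrals, the exponent still equals $2nH$, and confirm that the cumulative count of terms is dominated by $n^5\,2^{2n}$. In particular, handling the subtraction $K(t,u_j)-K(s,u_j)$ at the minimum index requires a careful use of the H\"older continuity of $K(\cdot,u)$ in its first argument in order to avoid spurious losses of $(t-s)$, and is where most of the technical effort will be concentrated.
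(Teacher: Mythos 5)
Your overall architecture (decompose $S_{I_n}=\sum_j \hat S^j_{I_n}$, convert Stratonovich to It\^o, then use It\^o isometry) matches the paper's in outline, but there are two concrete gaps that would prevent the argument from closing.

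First, the treatment of the contraction (correction) terms is logically backwards. You propose to control their \emph{second} moments by invoking Lemma~\ref{SignatureFirstMoment} (a \emph{first}-moment bound) ``together with a Jensen/Cauchy--Schwarz step needed to pass from a first-moment estimate to a second-moment estimate.'' Jensen and Cauchy--Schwarz give $\mathbb{E}|X|\le (\mathbb{E}X^2)^{1/2}$, i.e.\ they let you deduce first-moment bounds from second-moment bounds, never the reverse; no such upgrade exists. The paper never uses the first-moment lemma here. Instead, after the Stratonovich-to-It\^o conversion (Proposition~\ref{StratonovichIto}) it bounds $\mathbb{E}[J_{0s}(\nu)^2]$ for \emph{every} index pattern $\nu$ directly: the $n_j=2$ entries become deterministic Lebesgue factors that are estimated pointwise via the kernel bound $|K(t,u)|\le c_H[(t-u)^{H-1/2}+u^{H-1/2}]$ (Lemma~\ref{KernelBound1}) and the $G(u_b)$ estimates, while the remaining $n_j=1$ entries are peeled off one at a time by It\^o isometry, leading to a recursion in $\nu$.

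Second, and more fundamentally, your It\^o-isometry step does not by itself produce the scaling $(t-s)^{2nH}$. On the region of $A_j^n$ where some integration variables lie in $[0,s]$, the isometry produces factors like $\int_0^s K(t,u)^2\,du\asymp s^{2H}$ or $t^{2H}$, not $(t-s)^{2H}$; only the single $\delta K$ factor at the minimum index carries a $(t-s)$ power. Carried out naively, your computation yields a bound of order $(t-s)^{2H}\,T^{2(n-1)H}$. The paper's mechanism for recovering the full power is the split $Q=Q^1+Q^2$ (variables all in $[s,t]$ versus at least one below $s$), the factorization $B^j_{st}=C^j_{st}D^j_{st}$ into \emph{independent} factors supported on $[0,s]$ and $[s,t]$ so that $\mathbb{E}[(B^j)^2]=\mathbb{E}[(C^j)^2]\,\mathbb{E}[(D^j)^2]$, and, inside the $C^j$ estimate, the change of variables $y=(s-u_1)/(t-s)$ that converts the coupling between $\delta K_{st}(u_1)$ and the accumulated weight $(s-u_1)^{2(k-1)H}$ into the uniformly bounded integral defining $\beta_{k,H}$. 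You correctly identify this as ``the main obstacle,'' but a pointwise H\"older estimate on $K(\cdot,u)$ will not resolve it; the missing ingredient is precisely this independence factorization plus the $\beta_{k,H}$ change-of-variables argument.
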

	
	\begin{proof}
		Proof is deferred to Appendix. 
	\end{proof}

		\begin{theorem}[Upper bound for covariance of fBm signature]
		
		For $n \leq \left\lfloor \frac{1}{H} \right\rfloor$, 
		let $S_{I_p}(\boldsymbol{B}_{t})_{st}$ with any tuple $I_p = \{i_1,\dots,i_p \}$ of elements of $\{1,\dots,d\}$ and $S_{I_p}(\boldsymbol{B}_{t})_{st}$ with any tuple $J_q = \{j_1,\dots,j_q \}$ of elements of $\{1,\dots,d\}$denote $p$th and $q$th order integrals of a $d$-dimensional fractional Brownian motion Process when $p+q=2k = n$. Then for  $0 \le s <  t \le T$, we have
		\[
		\mathbb{E}[S_{I_p}(\boldsymbol{B}_{t})_{st} S_{J_q}(\boldsymbol{B}_{t})_{st}] \leq  \frac{2^n \beta_{k,H}}{k! H^k} (t - s)^{2kH}
		\] 
		where $\beta_{k,H} = \frac{\pi (\frac{1}{2}-H)}{\cos(\pi H)} + \frac{1}{1-2kH} $.
		
	\end{theorem}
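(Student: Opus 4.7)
The plan is to reduce this second-moment estimate to a first-moment estimate by using the shuffle identity, which, as emphasized after the definition of the shuffle product, is compatible with the Stratonovich integration used in the $H<1/2$ regime. First I would apply the shuffle product to write
\begin{equation*}
S_{I_p}(\boldsymbol{B}_t)_{st}\,S_{J_q}(\boldsymbol{B}_t)_{st} \;=\; \sum_{K \in I_p \shuffle J_q} S_K(\boldsymbol{B}_t)_{st},
\end{equation*}
so that the product becomes a finite sum of iterated Stratonovich integrals, each of order $p+q = n = 2k$. Since $n \leq \lfloor 1/H \rfloor$, every term on the right is well-defined via the rough path construction recalled at the start of Section 3.2.

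Next I would take expectations on both sides and invoke Lemma \ref{SignatureFirstMoment} term by term. Because $n = 2k$ is even, for every multi-index $K$ of length $n$ the lemma gives
\begin{equation*}
\mathbb{E}[S_K(\boldsymbol{B}_t)_{st}] \leq \frac{\beta_{k,H}}{k!\,H^k}\,(t-s)^{2kH}.
\end{equation*}
The shuffle set has cardinality exactly $\binom{p+q}{p} = \binom{n}{p} \leq 2^n$, so linearity of expectation yields
\begin{equation*}
\mathbb{E}[S_{I_p}(\boldsymbol{B}_t)_{st}\,S_{J_q}(\boldsymbol{B}_t)_{st}] \leq 2^n\,\frac{\beta_{k,H}}{k!\,H^k}\,(t-s)^{2kH},
\end{equation*}
which is precisely the claim.

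The main obstacle will be verifying that Lemma \ref{SignatureFirstMoment} applies uniformly to every multi-index produced by the shuffle, including those mixing indices from $I_p$ and $J_q$ or featuring repetitions. This is however already built into the statement of that lemma, which is phrased for an arbitrary tuple of elements of $\{1,\ldots,d\}$ and is insensitive to the specific choice of components beyond their total number. Apart from this check, the argument is purely combinatorial: all of the analytic work on moments of individual signature components is encapsulated in Lemma \ref{SignatureFirstMoment}, and the shuffle algebra structure of Stratonovich signatures takes care of the rest.
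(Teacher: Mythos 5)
Your proposal is correct and follows essentially the same route as the paper: expand the product via the shuffle identity, bound each of the $\binom{n}{p}$ resulting even-order terms by Lemma \ref{SignatureFirstMoment}, and absorb the count into the factor $2^n$. No substantive differences to note.
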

	\begin{proof}
		From shuffle product property we know that 
		\[
		\mathbb{E}[S_{I_p}(\boldsymbol{B}_{t})_{st}S_{J_q}(\boldsymbol{B}_{t})_{st}] = \sum_{K_n \in 	{I_p \shuffle J_q} } \mathbb{E}[S_{K_{n}}(\boldsymbol{B}_{t})_{st}]
		\]

		We know that $|{I_p \shuffle J_q}| = \frac{n!}{p!q!};  p+q = 2k = n$. Therefore applying Lemma \ref{SignatureFirstMoment} we have:
		\begin{align*}
			\mathbb{E}[S_{I_p}(\boldsymbol{B}_{t})_{st}S_{J_q}(\boldsymbol{B}_{t})_{st}] &\le \binom{n}{p}  \times\frac{{\beta_{{k,H}}}}{k!H^k} (t - s)^{nH} \\
			& \le \frac{2^n \beta_{k,H}}{k! H^k} (t - s)^{2kH}
		\end{align*}
		where  $\beta_{k,H} = \frac{\pi (\frac{1}{2}-H)}{\cos(\pi H)} + \frac{1}{1-2kH} $.
		
	\end{proof}

	\section{Simulation results}
	
	This section examines signature regression on simulated and real-world data. We first focus on applications in financial time series analysis. 
	We also study the \emph{UCI Air Quality} dataset, evaluating both methods using mean squared error for in-sample and test data.
	
	\textbf{Fractional Brownian Motion Analysis:}
	In this section we show results for simulated data in a financial setting. We generated sample paths $\boldsymbol{X}_i$ of a fractional Brownian motion representing the price of a stock. The outputs $y_i$ represent option prices produced by  the following payoff functions, with time to maturity $T =1$: \\
	(I) Call option $(d=1): \max \left(X_T^1-1.2,0\right)$; \\
	(II) Asian option $(d=1): \max \left(\operatorname{mean}_{0 \leq t \leq T}\left(X_t^1\right)-1.2,0\right)$; \\
	(III) Rainbow option I $(d=2): \max \left(X_T^1-X_T^2, 0\right)$; \\
	(IV) Rainbow option II $(d=2)$ : $\max \left(\max \left(X_T^1, X_T^2\right)-1.2,0\right)$;
	
	We then applied Lasso regression and signature regression to analyze the outputs.
	The signature computation was truncated at level $K=3$. The results were compared across different Hurst parameter values. The results are illustrated in Figures \ref{fig:CallOptions}, \ref{fig:PutOption}, \ref{fig:RainbowOptionI}, and \ref{fig:RainbowOptionII}. Where we graphed the test error against the Hurst parameter.  As anticipated, the signature method demonstrated greater efficiency than conventional regression methods as the irregularity of the data increased.
	The performance of both Lasso and Signature Lasso improved with an increase in the Hurst parameter. This is because higher Hurst parameters correspond to smoother signals, making it easier for regression methods to approximate the underlying function. 
	We note that for Asian and Rainbow II options the dependency on $H$ is stronger. This might be due to the fact that such options depend on the full path of the process. Call and Rainbow I options instead, are a function of $X_T$ only, i.e. the process at the last time instant.

	\begin{figure}[ht]
		\centering
		\begin{subfigure}[b]{0.37\textwidth} 
			\centering
			\includegraphics[width=\textwidth]{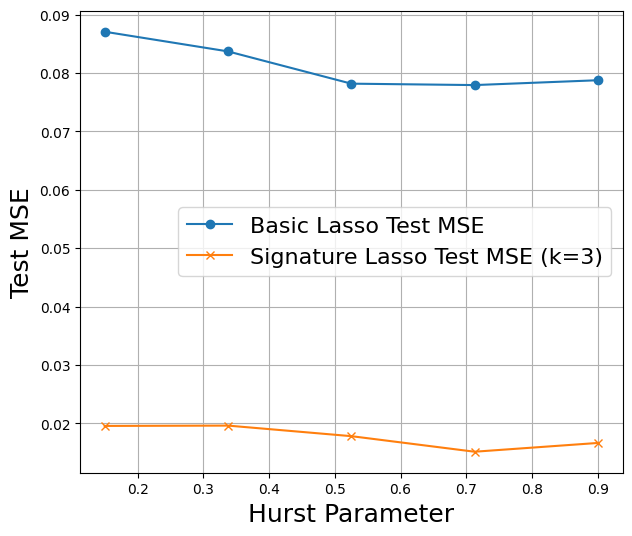}
			\caption{Lasso regression comparing with signature regression for Call options.}
			\label{fig:CallOptions}
		\end{subfigure}
		\hfill
		\begin{subfigure}[b]{0.37\textwidth}
			\centering
			\includegraphics[width=\textwidth]{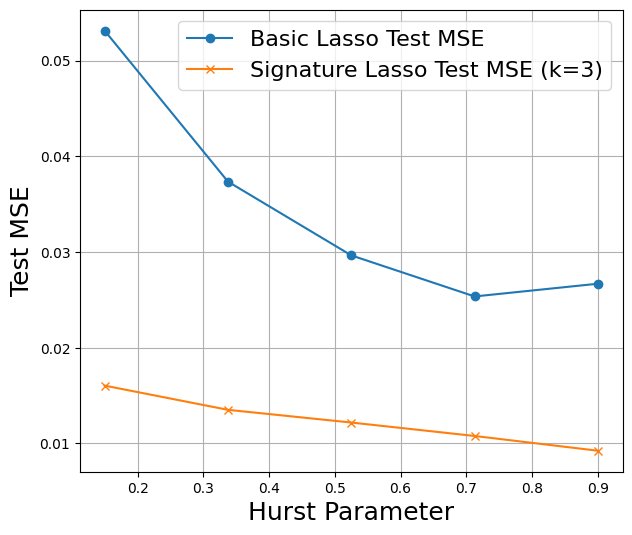}
			\caption{Lasso regression comparing with signature regression for Asian options.}
			\label{fig:PutOption}
		\end{subfigure}
		
		\vspace{1em} 
		
		\begin{subfigure}[b]{0.37\textwidth}
			\centering
			\includegraphics[width=\textwidth]{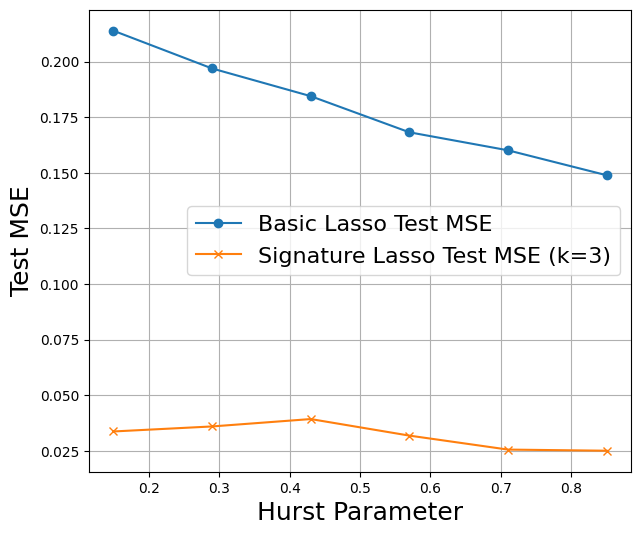}
			\caption{Lasso regression comparing with signature regression for Rainbow Option I.}
			\label{fig:RainbowOptionI}
		\end{subfigure}
		\hfill
		\begin{subfigure}[b]{0.37\textwidth}
			\centering
			\includegraphics[width=\textwidth]{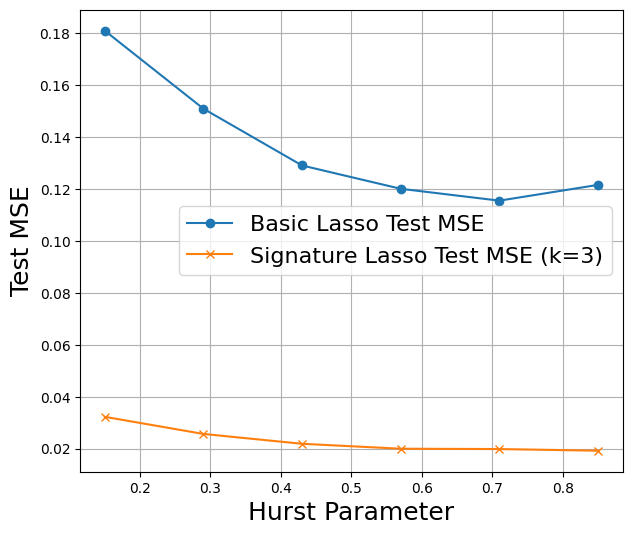}
			\caption{Lasso regression comparing with signature regression for Rainbow Option II.}
			\label{fig:RainbowOptionII}
		\end{subfigure}
		
		\caption{Comparison of Lasso and signature regression across various option functions shows that signature regression consistently outperforms Lasso.}
		\label{fig:main}
	\end{figure}

	\textbf{Air Quality Data.}
	We now move to the analysis of real world data. \emph{UCI Air Quality} dataset \citep{air_quality_360,fermanian2022functional}  contains hourly averages from five metal oxide chemical sensors recorded over a year in a polluted area in Italy. Our study focuses on the concentration of nitrogen dioxide (NO2), aiming to predict its ground truth value one hour into the future, based on the data from the previous 7 days.
	In order to create a multivariate setting, we included temperature and relative humidity as features in $ X $, representing a path in three dimensions ($ d = 3 $). 
	
	Specifically, we take $p=168$ previous time steps as inputs to our regression model, i.e, our dataset is 
	$\{(\boldsymbol{X}_{n}, y_n) , n =1, \ldots N\}$,
	where $\boldsymbol{X}_n = (X^i_{n-k},  k=1, \ldots, 168, i=1, 2, 3)$. 
	We applied Lasso regression and signature regression at two distinct levels $K = 2,3$, assessing the mean squared error for in-sample and test data. 
	In Figure \ref{fig:AirQuality} we show error curves as a function of the number of learning iterations during model training, by using a fixed tuning parameter. 
	For better visualization, 250 test time steps are selected, and the corresponding predictions are shown in \autoref{fig:pred} along with the MSEs. In order to determine the optimal tuning parameter, we employed a time-series cross-validation technique, by splitting the data in subsequent blocks of 900 training points followed by 100 test points. Simulation outcomes reveal that signature regression outperforms conventional regression methods.
	
	\begin{figure}[h]
		\centering
		\includegraphics[width=0.56\textwidth]{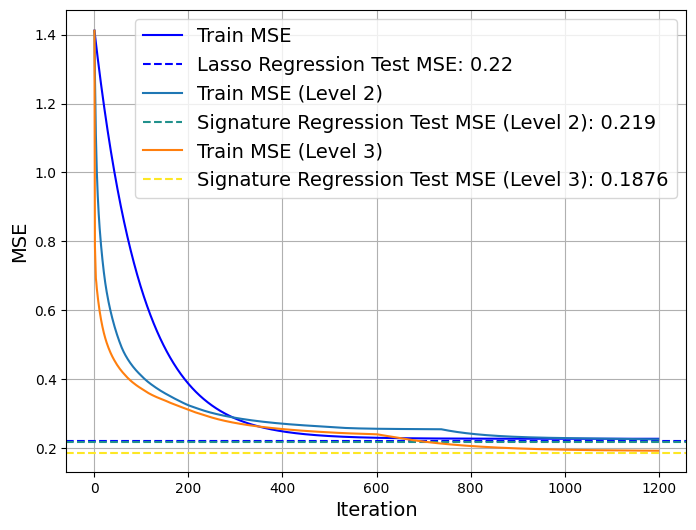}
		\caption{Lasso regression comparing to signature regression for Air Quality data.}
		\label{fig:AirQuality}
	\end{figure}
	
	\begin{figure}[h]
		\centering
		\includegraphics[width=1.1\textwidth]{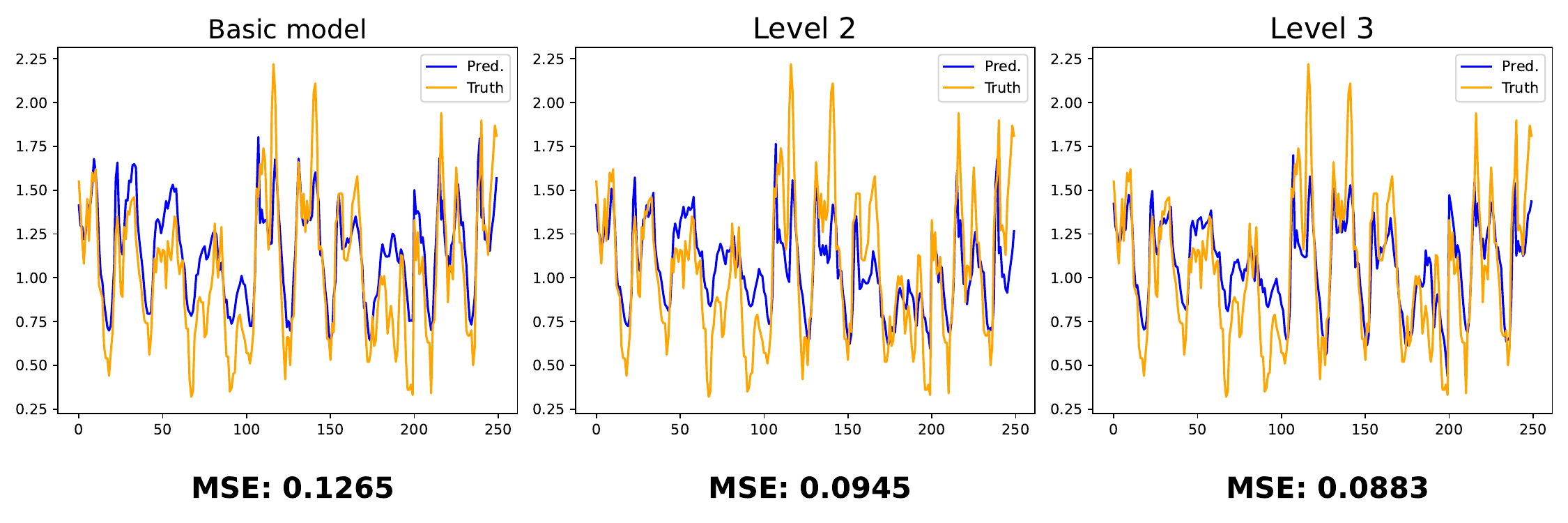}
		\caption{Predicted and true values for air quality data. From the left: standard Lasso regression, signature regression levels 2 and 3, respectively.}
		\label{fig:pred}
	\end{figure}

	\section{Conclusion}
	
This study demonstrates that signature-based Lasso regression is a powerful tool for analyzing time series data driven by fractional Brownian motion, particularly in regimes where classical methods struggle due to irregularity in paths. By leveraging the universal nonlinearity of path signatures and appropriately adapting to different interpretations based on the Hurst parameter, we establish theoretical consistency and practical performance gains. Simulation results further support the robustness and effectiveness of signature Lasso across both synthetic and real-world datasets, highlighting its promise for broader applications in irregular time series analysis.

	\section{Acknowledgment}
	
	This research was partially funded by grants 543964668 (SPP2298) and 520388526 (TRR391) by the Deutsche Forschungsgemeinschaft (DFG, German Research Foundation).

	\bibliographystyle{plainnat}

	\bibliography{references}
	
	\newpage
	
	\section{Appendix: Proofs}

	In order to provide proofs we need to review some notations.

\subsection{Notations}
	
	Before proceeding with the details, it is important to highlight and review some concepts and notations that are useful in our computations. 
	We denote the simplex over $[0,T]$ as
	\[
	\Delta^k_T = \Delta^k[0,T] = \{(u_1, \ldots, u_k) \in [0,t]^k,\, 0 < u_1 < \cdots < u_k \leq T\}
	\]

	When $H < \frac{1}{2}$,  the first level is given by
$S_{I_1} (B_t)_{st}  = B_t^i - B_s^i$
	and for $2 \le n \le \lfloor 1/H \rfloor$, it is extended to higher-order iterated integrals constructed over $\boldsymbol{B}_t$ as 
	\begin{equation*}
	S_{I_n}(\boldsymbol{B}_t)_{st}  = \sum_{j=1}^{n} (-1)^{j-1} \int_{A_j^n} \prod_{l=1}^{j-1} K(s, u_l)[K(t, u_j) - K(s, u_j)] \prod_{l=j+1}^{n} K(t, u_l) \, dW_{u_1}^{i_1} \cdots dW_{u_n}^{i_n},
	\end{equation*}
	where integration is understood in the Stratonovich sense and $A_j^n$ is the subset of $[0,t]^n$ defined by
	\[
	A_j^n = \{(u_1, \ldots, u_n) \in [0,t]^n \colon u_j = \min(u_1, \ldots, u_n),\ u_1 > \cdots > u_{j-1} \text{ and } u_{j+1} < \cdots < u_n\}.
	\]
	Sometimes for simplicity we write above equation as
	
\begin{equation}\label{RoughPathTerms}
    	S_{I_n}(\boldsymbol{B}_t)_{st} = \sum_{j=1}^{n} \hat{S}^j_{I_n}({\boldsymbol{B}_t})_{st}.
\end{equation}
We need to calculate integrals of below form
\begin{equation}
	Q_{st} = \int_{0 < u_1 < \cdots < u_n < t} \delta K_{st} \prod_{i=2}^{n} K_{\tau_i} \, dW.
	\tag{30}
\end{equation}

Notice that in the expression above, we made use of the notation introduced at the beginning of the current section, and for $i = 1, \ldots, n$, we assume $\tau_i = s$ or $t$. 
Let us further decompose $Q$ into $Q = Q^1 + Q^2$, where
\begin{equation}
	\begin{aligned}
		Q^1_{st} &= \int_{s < u_1 < \cdots < u_n < t} K_t^{\otimes n} \, dW, \\
		Q^2_{st} &= \int_{0 < u_1 < \cdots < u_n < t, \, u_1 < s} \delta K_{st} \prod_{i=2}^{n} K_{\tau_i} \, dW,
	\end{aligned}
\end{equation}
where $\delta K_{st} = K(t, u_j) - K(s, u_j)$ and $	\prod_{j=1}^{n} K(t, u_j) = K_t^{\otimes n}.$ The following proposition offers a method to express iterated Stratonovich integrals in terms of iterated Ito integrals, allowing us to take advantage of useful properties of Ito integration, such as the Ito isometry.

	\begin{proposition}[Proposition 2.6 from \cite{nualart2011construction}]\label{StratonovichIto}
		Let $\boldsymbol{Y}_t = (Y^1_t, \dots, Y^n_t)$ be an $n$-dimensional martingale of Gaussian type, defined on an interval $[s,t]$, of the form
		\begin{align*}
			Y_u(j) = \int_s^u \psi_v(j) \, dW_v^{i_j}
		\end{align*}
		for a family of $L^2([s,t])$ functions $(\psi(1), \dots, \psi(n))$, a set of indices $(i_1, \dots, i_n)$ belonging to $\{1,\dots,d\}^n$, and where we recall that $(W^1, \dots, W^d)$ is a $d$-dimensional Wiener process. Then the following decomposition holds true:
		\begin{align}\label{StratonovichItoFormula}
			\int_{s \leq u_1 < \cdots < u_n \leq t} 
			dY_{u_1}^{i_1} \cdots dY_{u_n}^{i_n}
			= \sum_{k = \lfloor n/2 \rfloor}^{n} \frac{1}{2^{n-k}} 
			\sum_{\nu \in D_n^k} J_{st}(\nu).
		\end{align}
		
		In the above formula, the sets $D_n^k$ are subsets of $\{1,2\}^k$ given by
		\begin{align*}
			D_n^k = \bigg\{ \nu = (n_1, \dots, n_k); 
			\sum_{j=1}^k n_j = n \bigg\},
		\end{align*}
		and the Ito-type multiple integrals $J_{st}(\nu)$ are defined as follows:
		\begin{align*}
			J_{st}(\nu) = \int_{s \leq u_1 < \cdots < u_k \leq t} 
			\partial Z_{u_1}^1 \cdots \partial Z_{u_k}^k,
		\end{align*}
		where, setting $\sum_{l=1}^{j} n_l = m(j)$, we have
		\begin{align*}
			Z^j = Y^{i_{m(j)}} \quad \text{if } n_j = 1,
		\end{align*}
		and
		\begin{align*}
			Z_u^j = \left( \int_s^u \psi_v(m(j) - 1) \psi_v(m(j)) \, dv \right) 
			\mathbf{1}_{(i_{m(j)-1} = i_{m(j)})} 
			\quad \text{if } n_j = 2.
		\end{align*}
	\end{proposition}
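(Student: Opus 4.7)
The plan is to proceed by induction on $n$, the number of iterated integrals, by exploiting the classical Stratonovich--It\^o conversion formula at each recursive step. The base case $n=1$ is immediate, since $\int_s^t dY^{i_1}_u = Y^{i_1}_t - Y^{i_1}_s$ corresponds to the unique partition $\nu = (1) \in D_1^1$ with coefficient $2^{0-1\cdot 0}=1$, and the trivial ``merged'' alternative is empty.

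For the inductive step, I would isolate the outermost integration by writing the full $n$-fold Stratonovich integral as $\int_s^t Z_u \circ dY^{i_n}_u$, where $Z_u$ denotes the $(n-1)$-fold iterated Stratonovich integral of $Y^{i_1},\dots,Y^{i_{n-1}}$ on $[s,u]$. Since $Z$ and $Y^{i_n}$ are continuous Gaussian semimartingales, the conversion formula gives
\[
\int_s^t Z_u \circ dY^{i_n}_u = \int_s^t Z_u\, dY^{i_n}_u + \tfrac{1}{2}\langle Z, Y^{i_n}\rangle_{[s,t]}.
\]
Using the martingale differential $dZ_u = \tilde Z_u\, \psi_u(n-1)\, dW^{i_{n-1}}_u$, with $\tilde Z$ the $(n-2)$-fold iterated integral, the bracket collapses to $\int_s^t \tilde Z_u\, \psi_u(n-1)\psi_u(n)\, du \cdot \mathbf{1}_{\{i_{n-1}=i_n\}}$, which is exactly the top-level ingredient $Z^j_u$ with $n_j=2$ appearing in the statement. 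I would then apply the induction hypothesis separately to $Z$ on the It\^o side and to $\tilde Z$ on the bracket side, and at the top append either a ``single'' step with $n_{k+1}=1$ or a ``merged'' step with $n_k=2$, to reassemble the full decomposition.

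The hardest part will be the combinatorial bookkeeping: one must show that the recursion generates each $\nu = (n_1,\dots,n_k) \in D_n^k$ exactly once, that every $n_j = 2$ contributes precisely one factor of $\tfrac{1}{2}$ from a single Stratonovich--It\^o correction (so that the global prefactor $1/2^{n-k}$ matches, since $n-k$ counts the merged slots), and that the integrands assemble level-by-level into $J_{st}(\nu)$ with the correct indicator $\mathbf{1}_{\{i_{m(j)-1}=i_{m(j)}\}}$ attached to each bracket slot. A natural way to organize this is to put $\nu$ in bijection with compositions of $n$ into parts of size $1$ and $2$ and match each such composition to the sequence of choices (single step vs.\ merge) made along the recursion. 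Since the identity is classical and quoted verbatim from \cite{nualart2011construction}, the full combinatorial verification can ultimately be deferred to that reference.
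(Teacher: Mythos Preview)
The paper does not give its own proof of this proposition: it is stated verbatim as Proposition~2.6 from \cite{nualart2011construction} and used as a black box throughout the appendix. So there is nothing to compare against in the paper itself.

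That said, your inductive scheme via the one-step Stratonovich--It\^o conversion is exactly the right approach and is essentially how the result is proved in the cited reference. Your identification of the bracket term $\tfrac12\langle Z,Y^{i_n}\rangle$ with the ``merged'' slot $n_j=2$ carrying the indicator $\mathbf{1}_{\{i_{n-1}=i_n\}}$ and the factor $\psi(n-1)\psi(n)$ is correct, as is the observation that the total power of $1/2$ equals the number of $2$'s in $\nu$, i.e.\ $n-k$. The bijection between compositions of $n$ into parts $\{1,2\}$ and the sequence of single/merge choices along the recursion is straightforward, so the combinatorial bookkeeping you flag as ``hardest'' is in fact routine. Your final sentence, deferring the details to \cite{nualart2011construction}, is precisely what the paper does.
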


Concerning $Q_{st}^2$, one can write 
\[
Q_{st}^2 = \sum_{j=1}^{n} B_{st}^j 
\]
where
\[
B_{st}^j = \int_{0 < u_1 < \cdots < u_j < s < u_{j+1} < \cdots < u_n < t} \delta K_{st} \prod_{i=2}^{n} K_{\tau_i} \, dW.
\]

for which we can write below decomposition to two independent terms  $B_{st}^j = C_{st}^j D_{st}^j$

\[
C_{st}^j = \int_{0 < u_1 < \cdots < u_j < s} \delta K_{st} \prod_{i=2}^{j} K_{\tau_i} \, dW
\]
\[
D_{st}^j = \int_{s < u_{j+1} < \cdots < u_n < t} K_t^{\otimes (n-j)} \, dW.
\]
Moreover, applying Proposition \ref{StratonovichIto} we have 
\[
C_{st}^{j} = \sum_{k=\lfloor j/2 \rfloor}^{j} \frac{1}{2^{j-k}} \sum_{\nu \in D_j^k} J_{0s}(\nu).
\]

\subsection{Appendix: Proofs}

\subsubsection*{Proof of Theorem \ref{UpperBoundYoung}}

In this section we focus on upper bounds when $H>1/2$. 

	\begin{proof}
	
	In order to prove this theorem we need below lemma from \cite{baudoin2007operators} and Remark \ref{SimplexCubeSymmetricFunc}. Let
	 $ I = (i_1, \ldots, i_k) \in \{1, \ldots, d\}^k $ is a word with length $ k $, then we use below notation
	\[
	\int_{\Delta^k_{[0,t]}} dB^{I_k} = \int_{0 < t_1 < \cdots < t_k \leq t} dB^{i_1}_{t_1} \cdots dB^{i_k}_{t_k}.
	\]
	
	\begin{lemma}[Lemma 10 of \cite{baudoin2007operators}]\label{GaussianVectorExpectation}
		Let $ G = (G^1, \dots, G^{2k}) $ be a centered Gaussian vector. We have
		\[
		\mathbb{E}[G^1 \dots G^{2k}] = \frac{1}{k!2^k} \sum_{\sigma \in \Theta_{2k}} \prod_{l=1}^k \mathbb{E}[G^{\sigma(2l)} G^{\sigma(2l-1)}],
		\]
		where $ \Theta_{2k} $ is the group of the permutations of the set $\{1, \dots, 2k\}$.
	\end{lemma}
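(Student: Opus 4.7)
The plan is to prove the identity via the moment-generating function of $G$. I would first invoke the classical closed form $M(t) = \mathbb{E}[\exp(\sum_i t_i G^i)] = \exp\bigl(\frac{1}{2} \sum_{i,j} \Sigma_{ij} t_i t_j\bigr)$ for the centered Gaussian with covariance $\Sigma_{ij} := \mathbb{E}[G^i G^j]$, and identify the target mixed moment $\mathbb{E}[G^1 \cdots G^{2k}]$ with $\partial_{t_1} \cdots \partial_{t_{2k}} M(0)$, equivalently the coefficient of the squarefree monomial $t_1 t_2 \cdots t_{2k}$ in the Taylor expansion of $M$ at the origin.

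Next I would expand the exponential as $\sum_{m \ge 0} \frac{1}{m!} \bigl(\frac{1}{2} \sum_{i,j} \Sigma_{ij} t_i t_j\bigr)^m$ and observe that only the $m = k$ term can contribute a degree-$2k$ squarefree monomial (smaller $m$ have too few $t$-factors, larger $m$ inevitably produce repeated indices). Expanding that term yields a multilinear sum over ordered $k$-tuples of ordered index pairs $((i_1,j_1), \ldots, (i_k,j_k))$; the contributions to the coefficient of $t_1 t_2 \cdots t_{2k}$ are exactly those tuples whose concatenation $(i_1,j_1,\ldots,i_k,j_k)$ is a permutation of $\{1,\ldots,2k\}$. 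Identifying this concatenation with $\sigma \in \Theta_{2k}$ via $\sigma(2l-1) = i_l$, $\sigma(2l) = j_l$, and combining the prefactors $\frac{1}{k!}$ (from the exponential series) and $\frac{1}{2^k}$ (from the $\frac{1}{2}$ inside the quadratic form), gives the claimed identity.

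The main obstacle will be the combinatorial bookkeeping in this last step: one must verify that the map from ordered $k$-tuples of ordered pairs covering $\{1,\ldots,2k\}$ to $\Theta_{2k}$ is a bijection, and that the $\frac{1}{k!\, 2^k}$ prefactor correctly absorbs the $k!$ reorderings of pairs and the $2^k$ swaps within each pair — both of which leave $\prod_{l=1}^{k} \Sigma_{\sigma(2l-1), \sigma(2l)}$ invariant by symmetry of $\Sigma$. A cleaner alternative that avoids the MGF manipulations altogether is induction on $k$ via Stein's lemma: applying $\mathbb{E}[G^1 f(G^2,\ldots,G^{2k})] = \sum_{j=2}^{2k} \mathbb{E}[G^1 G^j]\, \mathbb{E}[\partial_j f(G^2,\ldots,G^{2k})]$ to $f(x_2,\ldots,x_{2k}) = x_2 \cdots x_{2k}$ directly produces the pair-partition expansion, after which the conversion to a sum over $\sigma \in \Theta_{2k}$ is the standard overcounting identity $(2k-1)!! = (2k)!/(k!\, 2^k)$.
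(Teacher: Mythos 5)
Your argument is correct, but note that the paper does not prove this lemma at all: it is imported verbatim as Lemma 10 of the cited reference (Baudoin--Coutin) and used as a black box inside the proof of the covariance bound for $H>1/2$. What you have written is a correct, self-contained derivation of the Isserlis/Wick formula in the permutation-sum normalization. The moment-generating-function route is sound: $M(t)=\exp\bigl(\tfrac{1}{2}\sum_{i,j}\Sigma_{ij}t_it_j\bigr)$ is entire, the mixed moment is the coefficient of $t_1\cdots t_{2k}$, only the $m=k$ term of the exponential series has the right homogeneity, and the ordered $k$-tuples of ordered pairs whose concatenation exhausts $\{1,\dots,2k\}$ are literally in bijection with $\Theta_{2k}$ via $\sigma(2l-1)=i_l$, $\sigma(2l)=j_l$, so the prefactor $\tfrac{1}{k!\,2^k}$ falls out of the series with no further bookkeeping. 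The only mild overstatement is calling the absorption of the $k!$ pair-reorderings and $2^k$ within-pair swaps an ``obstacle'': in the permutation-sum form of the identity nothing needs to be absorbed, since the target already sums over all of $\Theta_{2k}$ with that prefactor; the counting $(2k-1)!!=(2k)!/(k!\,2^k)$ is only needed if one wants to pass to the sum over perfect matchings. Your Stein's-lemma alternative is equally valid and arguably cleaner for an induction. Either version would serve as a legitimate proof of the lemma the paper takes on faith.
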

	
	\begin{remark}\label{SimplexCubeSymmetricFunc}
		Consider a general symmetric function $ g(u_1,  u_2, \ldots, u_k) $, meaning for any permutation $ \sigma $ of the indices $ \{1, 2, \ldots, k\} $,
		\[
		g(u_1, u_2, \ldots, u_k) = g(u_{\sigma(2)}, \ldots, u_{\sigma(k)}).
		\]
		The cube $ [s, t]^{k} $ can be partitioned into $k!$ regions based on the ordering of $u_1, u_2, \ldots, u_k $, such as $s<  u_1 < u_2 < \cdots < u_k < t $, $ s< u_1 < u_3 < u_2 < u_4 < \cdots < u_k < t $, and so on. Since $ g $ is symmetric, the integral of $ g $ over each ordered region is the same. The integral over the simplex is the integral over the cube:
		\[
		\int_{[s, t]^{k}} g(u_1, u_2, \ldots, u_k) \, du_1 du_2 \cdots du_k = k! \times \int_{\Delta^k[s,t]} g(u_1, u_2, \ldots, u_k) \, du_1 du_2 \cdots du_k.
		\]
	\end{remark}
	
Let $p+q = 2k = n$ then from the shuffle product definition we have 
	\begin{equation*} \mathbb{E}\bigg[\int_{\Delta^{p}[0,1]} dB^{I_p} \int_{\Delta^{q}[0,1]} dB^{J_q} \bigg] = \mathbb{E}\bigg[\sum_{K_n \in {I_p \shuffle J_q}} \int_{\Delta^{2k}[0,1]} dB^{K_n}\bigg]
	\end{equation*}
	as a result we get
	
	\begin{equation*}
		\mathbb{E}\bigg[\sum_{K_{n}  \in {I_p \shuffle J_q}} \int_{\Delta^{2k}[0,1]} dB^{K_n}\bigg] =  \sum_{K_n \in {I_p \shuffle J_q}} \int_{\Delta^{2k}[0,1]}  \mathbb{E} \Big[ dB^{K_n} \Big].
	\end{equation*}
	Applying Lemma \ref{GaussianVectorExpectation}
	\[
	\mathbb{E} \Big[ dB^{K_n} \Big] = \frac{1}{k!2^k} \sum_{\sigma \in \Theta_{2k}} \prod_{l=1}^{k} \mathbb{E} \bigg[ dB_{t_{\sigma(\tau^{-1}(2l))}}^{i_{\sigma(\tau^{-1}(2l))}} dB_{t_{\sigma(\tau^{-1}(2l-1))}}^{i_{\sigma(\tau^{-1}(2l-1))}} \bigg],
	\]
	
		where $\tau$ is the shuffle permutation.  Utilizing the covariance function of the fractional Brownian motion, we consequently obtain
	\[
	\mathbb{E} \bigg[ dB_{t_{\sigma(\tau^{-1}(2l))}}^{i_{\sigma(\tau^{-1}(2l))}} dB_{t_{\sigma(\tau^{-1}(2l-1))}}^{i_{\sigma(\tau^{-1}(2l-1))}} \bigg] =  H(2H-1) \delta_{i_{\sigma(\tau ^{-1}(2l-1))}, i_{\sigma(\tau ^{-1}(2l))}} \times 
	|t_{\sigma(\tau ^{-1}(2l))} - t_{\sigma(\tau ^{-1}(2l-1))}|^{2H-2},
	\]

	In the second part, our goal is to find an upper bound for  
	\begin{equation}
		E \left[ \int_{\Delta^{p}[0,1]} dB^{I_p} \int_{\Delta^{q}[0,1]} dB^{J_q} \right] =
		\frac{H^{k}}{k! 2^k} (2H-1)^{k} \sum_{K_n \in {I_p \shuffle J_q}}  I,
	\end{equation}
	where 
	\begin{align*}
		I &=  \int_{\Delta^{2k}[0,1]} \sum_{\sigma \in \Theta_{2k}}
		\prod_{l=1}^{2k} \delta_{i_{\sigma(\tau ^{-1}(2l-1))}, i_{\sigma(\tau ^{-1}(2l))}} \times |t_{\sigma(\tau ^{-1}(2l))} - t_{\sigma(\tau ^{-1}(2l-1))}|^{2H-2} \, dt_1 \dots dt_{2k},
	\end{align*}
	Now we want to apply Remark \ref{SimplexCubeSymmetricFunc}. Theretofore, we need to check wether the symmetry condition in the remark holds. Summing over all permutations $\pi \in \Theta_{2k}$, the total integrand is:
	\[
	g(t_1, \dots, t_{2k}) = \sum_{\pi \in \Theta_{2k}} \prod_{l=1}^k \delta_{i_{\pi(2l-1)}, i_{\pi(2l)}}  |t_{\pi(2l)} - t_{\pi(2l-1)}|^{2H-2}.
	\]
	Let $\pi = \sigma \circ \tau^{-1}$. To check symmetry, apply a permutation $\rho \in \Theta_{2k}$ to the variables, mapping $t_i \to t_{\rho(i)}$ and we have:
	\[
	g(t_{\rho(1)}, \dots, t_{\rho(2k)}) = \sum_{\pi \in \Theta_{2k}} \prod_{l=1}^k \delta_{i_{\pi(2l-1)}, i_{\pi(2l)}}  |t_{\rho(\pi(2l))} - t_{\rho(\pi(2l-1))}|^{2H-2}.
	\]
	Since $|t_{\rho(\pi(2l))} - t_{\rho(\pi(2l-1))}| = |t_{\pi'(2l)} - t_{\pi'(2l-1)}|$ with $\pi' = \rho \circ \pi$, and the Kronecker deltas remain unchanged under reindexing, the sum is invariant:
	\[
	g(t_{\rho(1)}, \dots, t_{\rho(2k)}) = g(t_1, \dots, t_{2k}).
	\]
	Thus, $g$ is symmetric under all permutations of $\{t_1, \dots, t_{2k}\}$.
	Therefore, applying Remark \ref{SimplexCubeSymmetricFunc} we get:
	
	\[
	I = \frac{1}{(2k)!} \sum_{\pi \in \Theta_{2k}} \int_{[0,1]^{2k}} \prod_{l=1}^k \delta_{i_{\pi(2l-1)}, i_{\pi(2l)}}  |t_{\pi(2l)} - t_{\pi(2l-1)}|^{2H-2} \, dt_1 \dots dt_{2k}.
	\]
	For each $\pi$, if $\delta_{i_{\pi(2l-1)}, i_{\pi(2l)}} = 1$, compute:
	\[
	\int_{[0,1]^{2k}} \prod_{l=1}^k |t_{\pi(2l)} - t_{\pi(2l-1)}|^{2H-2} \, dt_1 \dots dt_{2k} = \prod_{l=1}^k \int_0^1 \int_0^1 |v_l - u_l|^{2H-2} \, du_l dv_l.
	\]
	Evaluate the pair integral:
	\[
	\int_0^1 \int_0^1 |v - u|^{2H-2} \, du dv = 2 \int_0^1 \int_0^u (u - v)^{2H-2} \, dv du = 2 \int_0^1 \frac{u^{2H-1}}{2H-1} \, du = \frac{2}{(2H-1)(2H)}.
	\]
	Thus:
	\[
	\int_{[0,1]^{2k}} \prod_{l=1}^k |t_{\pi(2l)} - t_{\pi(2l-1)}|^{2H-2} \, dt_1 \dots dt_{2k} = \left( \frac{2}{(2H-1)(2H)} \right)^k,
	\]
	assuming the maximum number of permutations is  $(2k)!$, we get:
	\[
	I \leq (2k)! \frac{1}{(2k)!} \left( \frac{2}{(2H-1)(2H)} \right)^k \leq  \left( \frac{2}{(2H-1)(2H)} \right)^k,
	\]
on the other hand we know $|{I_p \shuffle J_q}| = \binom{p+q}{p}.$
	Thus, the total integral over both simplices satisfies:
	
	\begin{equation}
		\sum_{\tau \in {I_p \shuffle J_q}} \sum_{\sigma \in \Theta_{2k}} I \leq  \frac{(2k)!(2k)!}{(2k)! p! q!} = \frac{(2k)!}{ p! q!} \left( \frac{2}{(2H-1)(2H)} \right)^k,
	\end{equation}
substituting this bound into the expectation formula:
	
	\[
	E \left[ \int_{\Delta^{p}[0,1]} dB^{I_p} \int_{\Delta^{q}[0,1]} dB^{J_q} \right] \leq  \frac{(2k)!}{ p! q!} \left( \frac{2}{(2H-1)(2H)} \right)^k,
	\]
	finally applying below scaling property of fractional Brownian motion, 
	\[
	\mathbb{E} \bigg[ \int_{\Delta^k[0,t]} dB^{I_k}\bigg]
	= t^{Hk} \mathbb{E} \bigg[ \int_{\Delta^k[0,1]} dB^{I_k} \bigg],
	\]
	we get
	\begin{align*}
		E \left[ \int_{\Delta^{p}[s,t]} dB^{I_p} \int_{\Delta^{q}[s,t]} dB^{J_q} \right] &\leq \frac{(2k)!}{k! 2^k p! q!} \left( \frac{2}{(2H-1)(2H)} \right)^k \times (t-s)^{2kH} \\
		&\le  \frac{\binom{2k}{p}}{k!2^k} 2^k (t-s)^{2kH}  \\
		&\le \frac{2^{2k}}{k!} (t-s)^{2kH} .
	\end{align*}
	
\end{proof}

\subsubsection*{Proof of Theorem \ref{SignatureExpectation}}

We first need to bound the kernel of fBms using the following lemma. The argument presented here is closely related to the proof of Proposition 4.1 in \cite{nualart2011construction}, with necessary adaptations to fit our framework.

\begin{lemma}\label{KernelBound1}
	Consider	below kernel $K(t, u)$ is given by
	
	\begin{equation}\label{KernelFormula}
	K(t, u) = c_H \left[ \left( \frac{u}{t} \right)^{1/2 - H} (t - u)^{H - 1/2} + \left( \frac{1}{2} - H \right) u^{1/2 - H} \int_u^t v^{H - 3/2} (v - u)^{H - 1/2} \, dv \right] 1_{\{0 < u < t\}},
	\end{equation}
	with $H \in (0, 1/2)$, $0 < u < t \leq T$. Then we have 
	\[
	|K(t, u)| \leq c_H \left[ (t - u)^{H - \frac{1}{2}} + u^{H - \frac{1}{2}} \right],
	\]
	with $ 0 < H < \frac{1}{2} $. 
\end{lemma}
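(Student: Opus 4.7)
The plan is to split $K=K_1+K_2$, where $K_1(t,u)=c_H(u/t)^{1/2-H}(t-u)^{H-1/2}$ and $K_2(t,u)=c_H(1/2-H)\,u^{1/2-H}\int_u^t v^{H-3/2}(v-u)^{H-1/2}\,dv$, and bound each piece against one of the two summands on the right. For $K_1$: because $1/2-H>0$ and $u/t\in(0,1)$, the factor $(u/t)^{1/2-H}\le 1$, so $K_1\le c_H(t-u)^{H-1/2}$ is immediate.

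The real work is bounding $K_2$, which reduces to estimating the Volterra integral $I:=\int_u^t v^{H-3/2}(v-u)^{H-1/2}\,dv$. I would apply the change of variables $v=u/(1-y)$, which maps $v\in[u,t]$ onto $y\in[0,(t-u)/t]$; a direct Jacobian computation then gives
\[
I \;=\; u^{2H-1}\int_0^{(t-u)/t} y^{H-1/2}(1-y)^{-2H}\,dy.
\]
Since the integrand is non-negative, extending the upper limit to $1$ gives $I\le u^{2H-1}B(H+1/2,1-2H)$, a finite Beta function because both parameters are positive on $(0,1/2)$. Multiplying by the $K_2$ prefactor, the $u$-powers telescope exactly to $u^{H-1/2}$, producing
\[
K_2 \;\le\; c_H\,(1/2-H)\,B(H+1/2,1-2H)\,u^{H-1/2}.
\]

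It remains to show that the dimensionless scalar $\kappa_H:=(1/2-H)B(H+1/2,1-2H)$ is at most $1$, so that $K_2\le c_H u^{H-1/2}$ with the same $c_H$ as in the kernel. Using $\Gamma(3/2-H)=(1/2-H)\Gamma(1/2-H)$ and the Legendre duplication identity $\Gamma(1/2-H)\Gamma(1-H)=2^{2H}\sqrt{\pi}\,\Gamma(1-2H)$, one simplifies $\kappa_H=\Gamma(H+1/2)\Gamma(1-H)/(2^{2H}\sqrt{\pi})$; this equals $1$ at $H=0$ and $1/2$ at $H=1/2$, and a short digamma-sign computation ($\psi(H+1/2)-\psi(1-H)-2\log 2<0$ on $(0,1/2)$) gives monotonic decrease, hence $\kappa_H\le 1$ throughout. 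Summing the estimates for $K_1$ and $K_2$ then yields the claim.

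The main obstacle is selecting the right substitution for $I$: naive estimates such as $v^{H-3/2}\le u^{H-3/2}$ introduce a mismatched $(t-u)^{H+1/2}$ factor that cannot be absorbed into the target $u^{H-1/2}$. The substitution $v=u/(1-y)$ is the one with the correct scaling homogeneity---$v$ and $u$ are dilated together---which is precisely why it unravels the Beta-function structure and matches the expected $u^{2H-1}$ scaling of the integral in a single move.
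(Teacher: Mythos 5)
Your proof is correct and follows essentially the same route as the paper: split the kernel into its two summands, bound $(u/t)^{1/2-H}\le 1$ for the first, and reduce the Volterra integral in the second to the Beta function $B(H+\tfrac12,1-2H)$ via a substitution (you compose the paper's two changes of variables $v=u(w+1)$ and $z=w/(w+1)$ into the single map $v=u/(1-y)$, landing on the identical integral $\int_0^{(t-u)/t}y^{H-1/2}(1-y)^{-2H}\,dy$). Your duplication-formula/digamma argument that $(\tfrac12-H)B(H+\tfrac12,1-2H)\le 1$ is a welcome addition, since the paper merely asserts this inequality without proof.
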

\begin{proof}
	We want to find the constant $C$ in the inequality	
	\[
	|K(t, u)| \leq C \left[ (t - u)^{H - 1/2} + u^{H - 1/2} \right],
	\]
	Kernel in Equation (\ref{KernelFormula}) consists of two terms. For the first term 
	since $0 < u < t$, we have $\frac{u}{t} \leq 1$. Additionally, $1/2 - H > 0$ (because $H < 1/2$), so $\left( \frac{u}{t} \right)^{1/2 - H} \leq 1.$
	Thus $\left( \frac{u}{t} \right)^{1/2 - H} (t - u)^{H - 1/2} \leq (t - u)^{H - 1/2}.$
	This matches the first term in the desired bound.  Multiplying by $c_H$, the contribution of the first term to $|K(t, u)|$ is $	c_H (t - u)^{H - 1/2}$,
	suggesting a coefficient of $c_H$ for the $(t - u)^{H - 1/2}$ term in the bound. 
	Now we focus on the second term. 
	We want to find $C$ such that:
	
	\[
	\int_u^t v^{H - \frac{3}{2}} (v - u)^{H - \frac{1}{2}} \, dv  \le C u^{2H - 1}
	\]
	
	where $ 0 < u < t $ and $0<H<1/2$. Substitute $ v = u(w + 1) $, so $ v - u = u w $, $ dv = u \, dw $, with limits from $ w = 0 $ to $ w = \frac{t - u}{u} $. The integral becomes:
	
	\begin{align*}
		\int_u^t v^{H - \frac{3}{2}} (v - u)^{H - \frac{1}{2}} \, dv &= \int_u^t [u(w + 1)]^{H - \frac{3}{2}} (u w)^{H - \frac{1}{2}} (u \, dw) \\
		&= \int_u^t u^{H - \frac{3}{2}} (w + 1)^{H - \frac{3}{2}} u^{H - \frac{1}{2}} w^{H - \frac{1}{2}} u \, dw \\
		& = u^{2H - 1}  \int_u^t (w + 1)^{H - \frac{3}{2}} w^{H - \frac{1}{2}} \, dw
	\end{align*}
	
	In this step we need to find an upper bound for the below integral
	\[
\int_0^{\frac{t - u}{u}} (w + 1)^{H - \frac{3}{2}} w^{H - \frac{1}{2}} \, dw,
	\]
	Substitute $ z = \frac{w}{w + 1} $, so $ w = \frac{z}{1 - z} $, $ w + 1 = \frac{1}{1 - z} $, $ dw = \frac{dz}{(1 - z)^2} $, and limits from $ z = 0 $ to $ z = \frac{t - u}{t} $:
	\begin{align*}
		\int_0^{\frac{t-u}{u}} (w + 1)^{H - \frac{3}{2}} w^{H - \frac{1}{2}} \, dw &= \int_0^{\frac{t-u}{u}} (1 - z)^{\frac{3}{2} - H} \left( \frac{z}{1 - z} \right)^{H - \frac{1}{2}} \frac{dz}{(1 - z)^2} \\
		& = \int_0^{\frac{t - u}{t}} z^{H - \frac{1}{2}} (1 - z)^{-2H} \, dz = I, \\
		& \le B(H + \frac{1}{2}, 1 - 2H),
	\end{align*}
	since $ \frac{t - u}{t} < 1 $. Therefore, the coefficient for the second term is $(\frac{1}{2}-H) B(H + \frac{1}{2}, 1 - 2H) $ which is less than $1$, and results in $C = c_H$.

\end{proof}

We begin by deriving an upper bound for $\mathbb{E}[(D^{j}_{st})^2]$, followed by an explicit estimate for $\mathbb{E}[(C^{j}_{st})^2]$ through several steps. To obtain the bound for $\mathbb{E}[(C^{j}_{st})^2]$, we first establish an upper bound for $\mathbb{E}(J_{0s}(\nu))$.

\textbf{Bound for $\mathbb{E}[(D^{j}_{st})^2] $ }

\begin{lemma}
Consider
\[
D_{st}^j = \int_{s < u_{j+1} < \cdots < u_n < t} K_t^{\otimes (n-j)} \, dW,
\]
where  $ dW = dW_{u_{j+1}} \cdots dW_{u_n} $, a Stratonovich integral over the simplex $ s < u_{j+1} < \cdots < u_n < t $. 
indicating $ K_t^{\otimes (n-j)} $ involves $ n-j $ terms. 

\[
	\mathbb{E}\left[(D_{st}^j)^2\right] \leq \frac{2^{2n-2j}}{(n-j)!} \times (\frac{2c_H^2}{H})^{n-j}  (s-t)^{2H(n-j)}.
\]

\end{lemma}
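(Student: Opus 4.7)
Write $m := n - j$. The key idea is to exploit the simple structure of $D_{st}^j$ (all integrands equal $K(t,\cdot)$) by converting the Stratonovich iterated integral to Ito form via Proposition~\ref{StratonovichIto}, and then applying the Ito isometry together with the pointwise kernel estimate of Lemma~\ref{KernelBound1}.

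First I would apply Proposition~\ref{StratonovichIto} with $\psi_v \equiv K(t,v)$ on $[s,t]$ and indices $(i_{j+1}, \ldots, i_n)$, which yields
\[
D_{st}^j = \sum_{k = \lfloor m/2 \rfloor}^{m} \frac{1}{2^{m-k}} \sum_{\nu \in D_m^k} J_{st}(\nu),
\]
where each $J_{st}(\nu)$ is a $k$-fold Ito iterated integral whose $r$-th differential is $K(t, u_r)\,dW^{\cdot}_{u_r}$ when $n_r = 1$ and $K(t, u_r)^2\,du_r$ when $n_r = 2$. Squaring, using Cauchy--Schwarz on the (finite) outer sum, and noting that the total number of pairs $(k, \nu)$ is bounded by $2^m$, reduces the problem to estimating $\mathbb{E}[J_{st}(\nu)^2]$ term by term.

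Next, for each $\nu$ the Ito isometry applied to the stochastic slots, interleaved with Lebesgue integration on the deterministic slots, bounds $\mathbb{E}[J_{st}(\nu)^2]$ by a $k$-fold iterated integral of $K(t,\cdot)^2$ (or its square) over the simplex $\{s<u_1<\cdots<u_k<t\}$. Applying Lemma~\ref{KernelBound1} together with the elementary inequality $(a+b)^2 \leq 2(a^2 + b^2)$ gives
\[
\int_s^t K(t,u)^2 \, du \leq 2 c_H^2 \int_s^t \bigl[(t-u)^{2H-1} + u^{2H-1}\bigr] du \leq \frac{2 c_H^2 (t-s)^{2H}}{H},
\]
where the last inequality uses the subadditivity $t^{2H} - s^{2H} \leq (t-s)^{2H}$ valid for $H \in (0, 1/2)$. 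Converting each simplex integral into a cube integral produces the factor $1/k!$, and collecting the dyadic weights $2^{-(m-k)}$ together with the $2^m$ from the Cauchy--Schwarz step yields the claimed bound $\frac{2^{2m}}{m!}(2c_H^2/H)^m (t-s)^{2Hm}$.

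The main obstacle will be the mixed terms $J_{st}(\nu)$ with some $n_r = 2$: there the Ito isometry acts only on the stochastic slots and must be combined with explicit Lebesgue integration of $K(t,\cdot)^2$. Verifying that the combinatorial prefactors from the two sums in Proposition~\ref{StratonovichIto}, together with the dyadic weights $2^{-(m-k)}$ and the simplex-to-cube factors $1/k!$, aggregate precisely to $2^{2m}/m!$ is the technically delicate part, and follows the pattern of the proof of Proposition~4.1 in \cite{nualart2011construction}.
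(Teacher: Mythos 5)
Your proposal follows essentially the same route as the paper's proof: both convert the Stratonovich iterated integral to Ito form via Proposition~\ref{StratonovichIto} (the paper cites this as a modification of Lemma~2.7 of \cite{nualart2011construction}), apply the Ito isometry with the kernel bound of Lemma~\ref{KernelBound1} and the estimate $\int_s^t[(t-u)^{H-1/2}+u^{H-1/2}]^2\,du\le \tfrac{2}{H}(t-s)^{2H}$, use Remark~\ref{SimplexCubeSymmetricFunc} for the $1/(n-j)!$ factor, and bound the combinatorial sum over $(k,\nu)$ by $2^{2(n-j)}$. The argument is correct and matches the paper's in all essential steps.
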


\begin{proof}

Our goal is to find a proper upper bound for $\mathbb{E}\left[(D_{st}^j)^2\right]$.
For this purpose we need Remark \ref{SimplexCubeSymmetricFunc}.

Applying Remark \ref{SimplexCubeSymmetricFunc} and applying a modification of  Lemma 2.7 in \cite{nualart2011construction}, we get

{\begin{equation}\label{UpperBoundD}
			\mathbb{E}\left[(D_{st}^j)^2\right] \leq \frac{c_H^{2n-2j}}{(n-j)!}
			\left(\sum_{k=\lfloor (n-j)/2\rfloor}^{n-j}  \frac{\lvert D_n^k\rvert}{2^{\,n-k}}\right)^2
			\left(
			\int_{s}^t \left[ (t - u)^{H - \frac{1}{2}} + u^{H - \frac{1}{2}} \right]  du 
			\right)^{n-j}.
		\end{equation}
}

Now we need to bound $\int_v^t \left[(t-u)^{H - 1/2} + u^{H - 1/2} \right]^2 du$
where we have

\begin{align*}
	\int_s^t \left[(t-u)^{H - 1/2} + u^{H - 1/2} \right]^2 du 
	&\le 2\int_s^t (t-u)^{2H - 1} \,du + 2\int_s^t u^{2H - 1} \,du \\
	&= \frac{2}{2H} \left((t-s)^{2H} + (t^{2H} - s^{2H})\right). \\
	&\le \frac{2}{H} (t-s)^{2H},
\end{align*}
where the last inequality results from the fact that $ a^{\alpha} - b^{\alpha} \leq (a - b)^{\alpha} $ for any $ 0 \leq b < a $ and $ \alpha \in (0,1) $. Moreover, for the summation in Equation (\ref{UpperBoundD}) we have

\begin{align*}
	\bigg( \sum_{k=\lfloor n/2 \rfloor}^{n} \frac{\binom{k}{n-k}}{2^{n-k}} \bigg)^2 =  \bigg( \sum_{m=0}^{\lfloor n/2 \rfloor} \binom{n-m}{m} \frac{1}{2^{m}}\bigg)^2 \le 2^{2n}.
\end{align*}

\end{proof}

\begin{remark}
	Applying same argument for $Q_{st}^1$ we get
	
	\begin{equation*}\label{Q1Bound}
		\mathbb{E}\left[(Q_{st}^1)^2\right] \leq \frac{2^{2n}}{n!} \times (\frac{2c_H^2}{H})^{n} (t - s)^{2nH }    
	\end{equation*}

\end{remark}

\textbf{Bound on $\mathbb{E}\left[ \left(C_{st}^j\right)^2 \right]$}

Now we focus on finding constant in the below inequality which can be obtained in several steps. 

\begin{equation}
	\mathbb{E}\left[ \left(C_{st}^j\right)^2 \right] \le C_c (t - s)^{2jH }
\end{equation}

Recall that 
\begin{equation}\label{BoundJ}
	C_{st}^{j} = \sum_{k=\lfloor j/2 \rfloor}^{j} \frac{1}{2^{j-k}} \sum_{\nu \in D_j^k} J_{0s}(\nu),
\end{equation}
where
\[
J_{0s}(\nu) = \int_{0 < u_1 < \cdots < u_k < s} \partial Z_{u_1}^1 \cdots \partial Z_{u_k}^k,
\]
The summation is depends on what value $\nu$ take so we need to consider all possible cases for $\nu$. 
Therefore we need to obtain a bound for $J_{0s}(\nu)$ which works for all $\nu$. 
\begin{proposition}
	For $J_{0s}(\nu)$ in Equation (\ref{BoundJ}) we have 
	\[
	\mathbb{E}\left[J_{0s}(\nu)^2\right] \leq C(t - s)^{2kH}.
	\]
	where
\[
C = 	\frac{k \beta_{{k,H}}}{k!} \big( \frac{2}{H}\big)^{2k-2} ,
\]	

and $
	\beta_{{k,H}}= \frac{\pi (\frac{1}{2}-H)}{\cos(\pi H)} + \frac{1}{1-2kH} $.

\end{proposition}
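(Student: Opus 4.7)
The strategy is to reduce $\mathbb{E}[J_{0s}(\nu)^2]$ to deterministic multiple integrals via Ito isometry, then bound those integrals using Lemma \ref{KernelBound1} together with the $L^2$-identity for the fBm increment, and finally assemble the stated constant using Beta-function identities.

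The first step is to unpack $J_{0s}(\nu)$ using Proposition \ref{StratonovichIto}: for $\nu = (n_1, \ldots, n_k) \in D_j^k$, each $\partial Z^l_{u_l}$ is either a martingale differential $\psi_{u_l}(m(l))\,dW^{i_{m(l)}}_{u_l}$ when $n_l = 1$, or a bounded-variation differential $\psi_{u_l}(m(l)-1)\psi_{u_l}(m(l))\,du_l\,\mathbf{1}_{i_{m(l)-1} = i_{m(l)}}$ when $n_l = 2$. The $\psi$ factors are restrictions of $K(t, \cdot)$, $K(s, \cdot)$, or $\delta K_{st}(\cdot) = K(t,\cdot) - K(s,\cdot)$ inherited from the original integrand of $C_{st}^j$. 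Because $|\nu| = k$ and $\sum_l n_l = j$, there are $2k-j$ martingale legs and $j - k$ bounded-variation legs, and exactly one of the $j$ underlying $\psi$ factors is $\delta K_{st}$.

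The second step is to square $J_{0s}(\nu)$ into a double iterated integral over two copies of the simplex, take expectation, and apply Ito isometry: each pair of martingale legs across the two copies collapses to a single $du$ integration under the constraint of matching indices, which kills most cross-terms. Symmetrization through Remark \ref{SimplexCubeSymmetricFunc} extends each simplex to a cube and yields the $1/k!$ prefactor. The resulting deterministic integral factorizes into one-dimensional blocks of the form $\int_0^s K(\tau,u)^2\,du$, $\int_0^s [K(t,u) - K(s,u)]^2\,du$, and deterministic Beta-like integrals from the BV legs. Squaring the bound of Lemma \ref{KernelBound1} gives $\int_0^s K(\tau, u)^2\,du \leq (2/H)\,\tau^{2H}$, while the Wiener representation of $B_t - B_s$ yields $\int_0^s [K(t,u) - K(s,u)]^2\,du \leq (t-s)^{2H}$. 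The single $\delta K_{st}$ factor supplies one $(t-s)^{2H}$ contribution, the remaining $2k-2$ kernel integrals combine to $(2/H)^{2k-2}$, and summing over the $k$ possible positions of the $\delta K_{st}$ leg produces the numerator factor $k$.

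The third step is to evaluate the integrals produced by the BV legs. The substitution $z = w/(w+1)$ used in the proof of Lemma \ref{KernelBound1} transforms the relevant inner integral into $\int_0^1 z^{H-1/2}(1-z)^{-2H}\,dz = B(H+1/2, 1-2H)$; the Gamma reflection identity $\Gamma(H+1/2)\Gamma(1/2-H) = \pi/\cos(\pi H)$ then produces the $\pi(1/2-H)/\cos(\pi H)$ piece of $\beta_{k,H}$. The companion term $1/(1-2kH)$ arises from an endpoint integral of the type $\int v^{-2kH}\,dv$ rescaled appropriately, whose integrability near zero is precisely ensured by the assumption $n \leq \lfloor 1/H \rfloor$. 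Combining these ingredients with the $(t-s)^{2kH}$ power that accumulates across the $k$ legs delivers the claimed bound. The principal obstacle is the uniform treatment across all $\nu \in D_j^k$: different placements of the BV indices induce different Ito-isometry pairings and different arrangements of $K(t, \cdot)$, $K(s,\cdot)$, and $\delta K_{st}$ among the legs, so one must verify that the worst-case configuration still satisfies the announced estimate, and in particular that the position chosen for the $\delta K_{st}$ leg can always be summed to yield the single combinatorial factor $k$ in the numerator.
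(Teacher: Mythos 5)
Your high-level ingredients (Ito isometry for the martingale legs, Lemma \ref{KernelBound1} for the kernels, Beta/Gamma identities for the constant $\beta_{k,H}$) match the paper's, but the proposal leaves unexecuted the two steps that actually carry the proof, and one of them would fail as stated. First, the claim that after Ito isometry the expectation ``factorizes into one-dimensional blocks'' of the form $\int_0^s K(\tau,u)^2\,du \le (2/H)\tau^{2H}$ and $\int_0^s[K(t,u)-K(s,u)]^2\,du \le (t-s)^{2H}$ cannot produce the stated bound: those blocks yield powers of $s$ and $t$, and only a single factor of $(t-s)^{2H}$ from the one $\delta K_{st}$ leg, so for $t-s$ small with $s$ of order one the product is of order $(t-s)^{2H}$, not $(t-s)^{2kH}$. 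The paper instead keeps the integrals nested: Lemma \ref{BoundG} bounds the inner bounded-variation legs by $\frac{(2c_H^2)^{k-1}}{(k-1)!\,H^{k-1}}(s-u_1)^{2(k-1)H}$ as a function of the outermost variable $u_1$, and the entire power $(t-s)^{2kH}$ is then extracted in one stroke by the substitution $y=(s-u_1)/(t-s)$ in the remaining weighted integral over $u_1$, which is exactly the quantity that $\beta_{k,H}$ bounds (Lemma \ref{MuEven} together with the $\alpha$ and $\gamma$ estimates). Relatedly, your factor $k$ in the numerator is misattributed: $\delta K_{st}$ always sits at the minimal variable $u_1$, so there is no sum over $k$ positions; in the paper the $k$ is simply the rewriting $1/(k-1)! = k/k!$.

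Second, you flag but do not resolve the ``uniform treatment across all $\nu\in D_j^k$''. That is the bulk of the paper's argument: for each entry $n_l=1$ the corresponding martingale leg is peeled off by Ito isometry, reducing $\mathbb{E}[J_{0s}(\nu)^2]$ to an integral of $\mathbb{E}[J_{0u}(\nu')^2]$ against a kernel-squared weight, and the resulting iterated double integrals must be merged back into single integrals via Beta-function evaluations (Lemma \ref{LemmaExpectationJDoubleintBound} and its sequel), with the Beta factors bounded by $1$ only under additional restrictions on $H$ (e.g. $(4r-2)H\ge 1$) that your proposal does not surface. Without this recursion and the nested $(s-u)$-weights it maintains, neither the announced constant $\frac{k \beta_{k,H}}{k!}\big(\frac{2}{H}\big)^{2k-2}$ nor, more importantly, the exponent $2kH$ on $(t-s)$ is reached.
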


In order to find constant $C_c$ we need some auxiliary lemmas. First we review Lemma 2.5 from \cite{nualart2011construction}. 
this lemma states that if $2kH < 1$ for $A > 0$ define 
	\begin{equation}\label{IntA}
	\beta_A = \int_0^A \left[ y^{H - \frac{1}{2}} - (1 + y)^{H - \frac{1}{2}} \right] \left[ y^{H - \frac{1}{2}} + (A - y)^{H - \frac{1}{2}} \right] y^{2(k - 1)H} \, dy.
\end{equation}
Then  $\displaystyle \sup_{A > 0} \beta_A < \infty$.
Furthermore, we can write $\sup_A \beta_A \leq \alpha + \gamma$, with
\[
\alpha = \int_0^\infty \left[ y^{H - \frac{1}{2}} - (1 + y)^{H - \frac{1}{2}} \right] y^{2(k - 1)H + H - \frac{1}{2}} \, dy,
\]
\[
\gamma = \int_0^A \left[ y^{H - \frac{1}{2}} - (1 + y)^{H - \frac{1}{2}} \right] (A - y)^{H - \frac{1}{2}} y^{2(k - 1)H} \, dy.
\]

\begin{lemma}\label{Alpha_int}
	Consider the integral:
	\[
	\alpha = \int_0^\infty \left[y^{H - \frac{1}{2}} - (1 + y)^{H - \frac{1}{2}}\right] y^{2(k - 1)H + H - \frac{1}{2}} \, dy,
	\]
	where $ 0 < H < \frac{1}{2} $ and $ 2kH < 1 $, then
	\[
	\alpha \leq \frac{1}{1-2kH}.
	\] 
\end{lemma}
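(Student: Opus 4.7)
The plan is to exploit the cancellation in the bracket $y^{H-1/2}-(1+y)^{H-1/2}$ to expose an integrable tail whose integral over $(0,\infty)$ is governed by the constraint $2kH<1$. My first step is the mean-value identity
\[
y^{H-1/2}-(1+y)^{H-1/2}=\left(\tfrac{1}{2}-H\right)\int_0^1 (y+t)^{H-3/2}\,dt,
\]
obtained by writing the difference as the integral of the derivative of $x\mapsto x^{H-1/2}$ over $[y,y+1]$. This converts a difference of like singularities into a single monotone expression. Since $H-3/2<0$ and $t\ge 0$, the elementary pointwise bound $(y+t)^{H-3/2}\le y^{H-3/2}$ then yields
\[
y^{H-1/2}-(1+y)^{H-1/2}\le\left(\tfrac12-H\right)y^{H-3/2},\qquad y>0.
\]

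Substituting this into the definition of $\alpha$ collapses the integrand to $(1/2-H)\,y^{H-3/2}\cdot y^{(2k-1)H-1/2}=(1/2-H)\,y^{2kH-2}$. The hypothesis $2kH<1$ becomes decisive here: the exponent $2kH-2<-1$ ensures integrability at infinity, and the antiderivative $y^{2kH-1}/(2kH-1)$ produces precisely the factor $1/(1-2kH)$ that appears in the claim. This already handles the tail $\int_c^\infty$ for any cutoff $c>0$, giving a contribution $\le\frac{(1/2-H)\,c^{2kH-1}}{1-2kH}$.

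Because $y^{2kH-2}$ is not integrable at the origin, a separate estimate is required on $(0,c)$. There I would use the crude pointwise majorisation $y^{H-1/2}-(1+y)^{H-1/2}\le y^{H-1/2}$ (valid because $(1+y)^{H-1/2}\ge 0$), so the integrand is bounded by $y^{2kH-1}$, integrable near zero since $2kH>0$, producing a contribution $\le c^{2kH}/(2kH)$. Combining both pieces and optimising the cut-off (natural candidate $c=1/2-H$, at which the two pointwise bounds coincide) leads to the stated constant $1/(1-2kH)$, crucially using the prefactor $(1/2-H)<1/2$ inherited from the MVT to absorb the near-zero contribution.

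The main obstacle is the bookkeeping at the split point: the two sub-bounds scale in $y$ at similar rates, and any loose estimate on $(0,c)$ introduces a spurious factor such as $1/(2kH)$ that the statement rules out. Making this work requires using the $(1/2-H)$ prefactor and tying $c$ to the geometry of $H$ so that the near-zero piece is dominated by the tail piece, after which the bound $\alpha\le 1/(1-2kH)$ follows from elementary manipulations. A cleaner alternative worth attempting is the substitution $y=ts$ in the mixed integral $(1/2-H)\int_0^1\int_0^\infty(y+t)^{H-3/2}y^{(2k-1)H-1/2}\,dy\,dt$, which factorises $\alpha$ into an explicit Beta function times $1/(2kH)$; bounding that Beta factor by standard reflection/duplication formulas is an orthogonal route to the same estimate.
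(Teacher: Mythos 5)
Your decomposition is the same as the paper's: the bound $y^{H-1/2}-(1+y)^{H-1/2}\le y^{H-1/2}$ near the origin and the mean-value bound $(\tfrac12-H)\,y^{H-3/2}$ in the tail, glued at a cutoff. The gap is in the last step, where you assert that optimizing the cutoff $c$ ``leads to the stated constant $1/(1-2kH)$''. It does not. Your two pieces give
\[
\alpha \le \frac{c^{2kH}}{2kH}+\frac{\left(\tfrac12-H\right)c^{2kH-1}}{1-2kH},
\]
and minimizing over $c>0$ does select $c=\tfrac12-H$, but the minimum value is
\[
\frac{\left(\tfrac12-H\right)^{2kH}}{2kH\,(1-2kH)},
\]
which is $\le \frac{1}{1-2kH}$ only if $(\tfrac12-H)^{2kH}\le 2kH$. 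That fails whenever $2kH$ is small: as $2kH\to 0$ the left-hand side tends to $1$ while the right-hand side tends to $0$, so the optimized bound blows up like $1/(2kH)$ and cannot be absorbed into $1/(1-2kH)$. The ``spurious factor $1/(2kH)$'' you flag as the main obstacle is not a bookkeeping artifact that a cleverer cutoff removes; it is genuinely present in $\alpha$.

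For comparison, the paper's own proof fixes the cutoff at $c=1$, derives the honest bound $\alpha\le \frac{1}{2kH}+\frac{\frac12-H}{1-2kH}$, and then asserts the lemma's conclusion with no further argument, so it has exactly the same defect. In fact the stated inequality appears to be false: for $k=1$, $H=0.1$ one has
\[
\alpha \ge \int_0^1\left[y^{-0.8}-(1+y)^{-0.4}y^{-0.4}\right]dy \ge \int_0^1 y^{-0.8}\,dy-\int_0^1 y^{-0.4}\,dy = 5-\tfrac{1}{0.6}>3,
\]
already exceeding the claimed bound $\frac{1}{1-2kH}=1.25$. The correct conclusion of this line of argument is the two-term bound $\frac{1}{2kH}+\frac{\frac12-H}{1-2kH}$ (or, after optimizing $c$, the displayed minimum), and the constant $\beta_{k,H}$ used downstream would need to be enlarged accordingly.
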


\begin{proof}
	To estimate an upper bound, we split the integral into two parts:
	\[
	\alpha = \int_0^1 \left[y^{H - \frac{1}{2}} - (1 + y)^{H - \frac{1}{2}}\right] y^{2(k - 1)H + H - \frac{1}{2}} \, dy + \int_1^\infty \left[y^{H - \frac{1}{2}} - (1 + y)^{H - \frac{1}{2}}\right] y^{2(k - 1)H + H - \frac{1}{2}} \, dy.
	\]
	For small $ y $, we use the fact that:
	$(1 + y)^{H - 1/2} \geq 1,$
	and since $ H - \frac{1}{2} < 0 $, we have:
	$y^{H - \frac{1}{2}} - (1 + y)^{H - \frac{1}{2}} \leq y^{H - \frac{1}{2}}.$
	Thus,
	$\left|y^{H - \frac{1}{2}} - (1 + y)^{H - \frac{1}{2}}\right| y^{2(k - 1)H + H - \frac{1}{2}} \leq y^{2kH - 1},$
	and so:
	\[
	\int_0^1 \left[y^{H - \frac{1}{2}} - (1 + y)^{H - \frac{1}{2}}\right] y^{2(k - 1)H + H - \frac{1}{2}} \, dy \leq \int_0^1 y^{2kH - 1} dy = \frac{1}{2kH}.
	\]
	Now we need to consider integral behavior for $y\ge 1$. We consider the expression:
	$f(y) = y^{H - \frac{1}{2}} - (1 + y)^{H - \frac{1}{2}}, \quad \text{for } y > 1 \text{ and } H \in \left(0, \tfrac{1}{2}\right).$ 
	Since $ H - \frac{1}{2} < 0 $, the function $ x \mapsto x^{H - \frac{1}{2}} $ is decreasing. Therefore, $ f(y) > 0 $.
	We can represent the difference as an integral using the Fundamental Theorem of Calculus:
	\[
	f(y) = y^{H - \frac{1}{2}} - (1 + y)^{H - \frac{1}{2}}  = \int_y^{y+1} (\frac{1}{2}-H ) x^{H - \frac{3}{2}} \, dx.
	\]
	Now, because $ x \geq y \geq 1 $ and $ H - \frac{3}{2} < -1 $, the function $ x^{H - \frac{3}{2}} $ is positive and decreasing. Thus, for all $ x \in [y, y+1] $, we have $x^{H - \frac{3}{2}} \leq y^{H - \frac{3}{2}}$ and therefore,
	\[
	\int_y^{y+1} x^{H - \frac{3}{2}} \, dx \leq y^{H - \frac{3}{2}}.
	\]
	Putting it all together, we obtain $f(y) \leq (\frac{1}{2} - H) y^{H - \frac{3}{2}}.$ Therefore:
	\[
	\int_1^\infty \left[y^{H - \frac{1}{2}} - (1 + y)^{H - \frac{1}{2}}\right] y^{2(k - 1)H + H - \frac{1}{2}} \, dy \leq (1/2 - H ) \int_1^\infty y^{2kH - 2} dy = \frac{1/2 - H }{1 - 2kH},
	\]
	Combining both regions, we obtain:
	\begin{equation}\label{AlphaOLD}
		\alpha \leq \frac{1}{2kH} + \frac{\frac{1}{2} -H }{1 - 2kH},
	\end{equation}
	when  $ 0 < H < \frac{1}{2} $ and $ 2kH < 1 $.
	Finally we get $\alpha \le \frac{1}{1-2kH}$.
\end{proof}

In addition from \cite{nualart2011construction} we have
\begin{equation*}
	\gamma \leq  \left( \tfrac{1}{2} - H \right)^{2kH} \Gamma\left( \tfrac{1}{2} - H \right) \Gamma\left( H + \tfrac{1}{2} \right). \notag
\end{equation*}
Where $2kH\le 1$ we get that $\left( \tfrac{1}{2} - H \right)^{2kH} \le 1$. Moreover from reflection formula of the $\Gamma$ function we know that $\Gamma(z)\Gamma(1 - z) = \frac{\pi}{\sin(\pi z)}$ Therefore:
\[
\gamma \le \frac{\pi}{\cos(\pi H)}.
\]
Note that $\gamma$ is finite. Since the Gamma function argument is between 0 and 1, it diverges only in the case $H \to 1/2$ Gamma function argument tends to $1/2$. But when $H \to 1/2$ then $2kH \to 1$ therefore we have 
\begin{equation*}
	\gamma \leq  \left( \tfrac{1}{2} - H \right) \Gamma\left( \tfrac{1}{2} - H \right) \Gamma\left( H + \tfrac{1}{2} \right) = \Gamma\left( \tfrac{3}{2} -H \right) \Gamma\left( H + \tfrac{1}{2} \right),
\end{equation*}
Therefore the integral in Equation (\ref{IntA}) will be bounded by below constant
\begin{equation}\label{BoundIntA}
\beta_{{k,H}}= \frac{\pi (\frac{1}{2}-H)}{\cos(\pi H)} + \frac{1}{1-2kH}.
\end{equation}

\begin{lemma}\label{BoundG}
	Let
	\[
	G(u_b) = \int_{0 < u_b < u_{b+1} < \cdots < u_k < s} 
	\prod_{h = b+1}^{k} K(\tau_{m(h)-1}, u_h) K(\tau_{m(h)}, u_h) \, du_{b+1} \cdots du_k,
	\]
	then 
	\[
	|G(u_b)| \leq \frac{(2c_H^2)^{k-b}}{(k-b)!H^{k-b}} (s - u_b)^{2(k - b)H}.
	\]
	
\end{lemma}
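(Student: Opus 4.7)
The plan is to bound $|G(u_b)|$ by inserting the pointwise kernel estimate from Lemma \ref{KernelBound1} into each factor of the integrand, then reducing the simplex integral to a product via the symmetrization trick. Since $\tau_{m(h)-1}, \tau_{m(h)} \in \{s,t\}$ and $u_h < s \le t$, the negativity of the exponent $H-\tfrac12$ gives $(\tau - u_h)^{H-1/2} \le (s-u_h)^{H-1/2}$, so Lemma \ref{KernelBound1} yields
\[
|K(\tau_{m(h)-1}, u_h)\,K(\tau_{m(h)}, u_h)| \le c_H^2\bigl[(s-u_h)^{H-1/2} + u_h^{H-1/2}\bigr]^2.
\]
This uniformizes all the $\tau_{m(h)}$ into a single $s$, so the remaining integrand over the variables $u_{b+1},\ldots,u_k$ is symmetric.

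Next I would apply Remark \ref{SimplexCubeSymmetricFunc} to lift the simplex integral to a cube integral with a factor $1/(k-b)!$:
\[
\int_{u_b < u_{b+1}<\cdots<u_k<s} \prod_{h=b+1}^{k} \bigl[(s-u_h)^{H-1/2} + u_h^{H-1/2}\bigr]^2 du_{b+1}\cdots du_k = \frac{1}{(k-b)!} \left(\int_{u_b}^{s}\!\bigl[(s-u)^{H-1/2}+u^{H-1/2}\bigr]^2 du\right)^{k-b}.
\]
The inner one-dimensional integral is exactly the same type already estimated in the bound for $\mathbb{E}[(D_{st}^j)^2]$ earlier in the appendix: expanding the square gives a sum dominated by $2(s-u)^{2H-1}+2u^{2H-1}$, which integrates to $(s-u_b)^{2H}/H$ plus $(s^{2H}-u_b^{2H})/H$. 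Using the elementary inequality $a^{\alpha}-b^{\alpha}\le (a-b)^{\alpha}$ for $\alpha \in (0,1)$, both contributions are bounded by $(s-u_b)^{2H}/H$, so the inner integral is at most $2(s-u_b)^{2H}/H$.

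Combining the three steps (kernel bound, symmetrization, and one-dimensional estimate) gives
\[
|G(u_b)| \le \frac{c_H^{2(k-b)}}{(k-b)!}\left(\frac{2(s-u_b)^{2H}}{H}\right)^{k-b} = \frac{(2c_H^2)^{k-b}}{(k-b)!\,H^{k-b}}(s-u_b)^{2(k-b)H},
\]
which is the claim. There is no serious obstacle here once the kernel estimate is in place; the only mild care required is the replacement $\tau_{m(h)} \mapsto s$ in the kernel bound (which is legitimate because $H-\tfrac12<0$ and $\tau \ge s > u_h$) and the use of $a^\alpha - b^\alpha \le (a-b)^\alpha$ to clean up the boundary term $s^{2H}-u_b^{2H}$ into the desired power of $(s-u_b)$.
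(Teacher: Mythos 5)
Your proof is correct and follows essentially the same route as the paper's: bound each kernel factor via Lemma \ref{KernelBound1} using the monotonicity of $x\mapsto x^{H-1/2}$ to replace $\tau_{m(h)}$ by $s$, symmetrize the simplex into a cube with the factor $1/(k-b)!$, expand the square with $(a+b)^2\le 2a^2+2b^2$, and control the boundary term with $s^{2H}-u_b^{2H}\le (s-u_b)^{2H}$. The only difference is the order in which you symmetrize and expand the square, which is immaterial.
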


\begin{proof}
	From Lemma \ref{KernelBound1} and since $s\le\tau_{m(h)}\le t$ we have
%
	\begin{align}
		|G(u_b)|  & \le c_H^{2(k-b)}\int_{0<u_b < u_{b+1}  < \cdots < u_k < s} \prod_{h=b+1}^{k} \left[(s - u_h)^{H - 1/2} + u_h^{H - 1/2} \right]^2 \, du_{b+1} \cdots du_k \notag \\
		&= c_H^{(k-b)} \int_{0<u_b < u_{b+1} < \cdots < u_k < s} \prod_{h=b+1}^{k} \left[(s - u_h)^{H - 1/2} + u_h^{H - 1/2} \right]^2 \, du_{b+1} \cdots du_k \notag \\
		&\leq  c_H^{(k-b)} \frac{2^{k-b}}{(k-b)!} \int_{[u_b, s]^{k-b}} \prod_{h=b+1}^{k} \left[(s - u_h)^{2H - 1} + u_h^{2H - 1} \right] \, du_{b+1} \cdots du_k \notag \\
		& = c_H^{(k-b)} \frac{2^{k-b}}{(k-b)!} \bigg( \int_{[u_b, s]} \left[(s - u_h)^{2H - 1} + u_h^{2H - 1} \right] \, du_{h} \bigg)^{k-b} \notag \\
		& \le  \frac{c_H^{2(k-b)}  2^{k-b}}{H^{k-b}(k-b)!} \bigg( (s-u_b)^{2H} \bigg)^{k-b} \notag
	\end{align}

\end{proof}

\begin{lemma}\label{MuEven}
	Assume term $J_{0s}(\nu)$ for for $\nu = (j_1, \ldots, j_k)$ = (2,2,\ldots,2) is defined  as follows
	\[
	J_{0s}(\nu) = \int_{0 < u_1 < \cdots < u_k < s} [K(t, u_1) - K(s, u_1)] K(\tau_2, u_1)
	\prod_{h=2}^k K(\tau_{2h-1}, u_h) K(\tau_{2h}, u_h) \, du_1 \cdots du_k.
	\]
	Then we have:
	\[
	\mathbb{E}|J_{0s}(\nu)| \leq C_{\nu_{2k}} (t - s)^{2kH},
	\]
	where 	
	\[
	C_{\nu_{2k}} = \frac{c_H^2}{(k-1)!} (\frac{2c_H^2}{H})^{k-1} \times \beta_{{k,H}}.
	\]
	
\end{lemma}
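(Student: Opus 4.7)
The plan is to exploit the crucial structural feature of the index vector $\nu = (2, 2, \ldots, 2)$: by Proposition \ref{StratonovichIto}, every $n_j = 2$, so each $\partial Z_{u_h}^h$ is of the deterministic "Stratonovich correction" type $\int_s^u \psi_v\,\psi_v\,dv$ rather than a stochastic integral. Consequently $J_{0s}(\nu)$ is itself deterministic, so $\mathbb{E}|J_{0s}(\nu)| = |J_{0s}(\nu)|$, and it suffices to produce a pointwise bound on the iterated Lebesgue integral.

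First I would peel off the innermost $k-1$ layers of integration. Fixing $u_1$ and writing the remaining integral over $\{u_1 < u_2 < \cdots < u_k < s\}$, the integrand is exactly of the form treated in Lemma \ref{BoundG} with $b = 1$, so
\[
\Bigl|\int_{u_1 < u_2 < \cdots < u_k < s} \prod_{h=2}^{k} K(\tau_{2h-1}, u_h)\,K(\tau_{2h}, u_h)\, du_2\cdots du_k\Bigr| \leq \frac{(2c_H^2)^{k-1}}{(k-1)!\,H^{k-1}}\,(s - u_1)^{2(k-1)H}.
\]
Factoring this out of $|J_{0s}(\nu)|$ reduces the problem to estimating the one-dimensional integral
\[
\mathcal{I} := \int_0^s |K(t, u_1) - K(s, u_1)|\,|K(\tau_2, u_1)|\,(s - u_1)^{2(k-1)H}\,du_1.
\]

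For $\mathcal{I}$, I would apply Lemma \ref{KernelBound1} to $K(\tau_2, u_1)$ (with $\tau_2 \in \{s,t\}$), which yields a bound of the shape $c_H\bigl[u_1^{H-1/2} + (\tau_2 - u_1)^{H-1/2}\bigr]$, and use the standard pointwise bound on the kernel increment $|K(t, u_1) - K(s, u_1)|$ in terms of $u_1^{H-1/2} - (1 + (s-u_1)/(t-s))^{H-1/2}\,(t-s)^{\cdots}$ style expressions (as in \cite{nualart2011construction}). After the natural change of variable $y = (s-u_1)/(t-s)$, pulling out a factor of $(t-s)^{2H}$ and recognising a factor $(t-s)^{2(k-1)H}$ built from $(s-u_1)^{2(k-1)H}$, the integrand collapses into exactly the form of $\beta_A$ in Eq. (\ref{IntA}) with $A = s/(t-s)$. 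By the estimates leading to Eq. (\ref{BoundIntA}), this is bounded by $\sup_A \beta_A \leq \beta_{k,H}$, giving
\[
\mathcal{I} \leq c_H^2\,\beta_{k,H}\,(t - s)^{2kH}.
\]
Combining this with the bound on the inner $(k-1)$-fold integral produces
\[
|J_{0s}(\nu)| \leq \frac{c_H^2}{(k-1)!}\left(\frac{2 c_H^2}{H}\right)^{k-1}\beta_{k,H}\,(t - s)^{2kH} = C_{\nu_{2k}}\,(t-s)^{2kH},
\]
as claimed.

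The main technical obstacle is the single-variable estimate $\mathcal{I}$: one must carefully combine the correct bound on the difference $K(t, u_1) - K(s, u_1)$ with the kernel bound on $K(\tau_2, u_1)$, choose the change of variable that separates out the $(t-s)^{2H}$ scaling, and verify that the resulting integrand is pointwise dominated by the integrand defining $\beta_A$. All subsequent steps are either routine factorial bookkeeping or direct applications of lemmas already established above.
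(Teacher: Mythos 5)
Your proposal follows essentially the same route as the paper's proof: observe that $J_{0s}(\nu)$ is a deterministic iterated integral, peel off the inner $(k-1)$ layers via Lemma \ref{BoundG} with $b=1$ to extract the factor $\frac{(2c_H^2)^{k-1}}{(k-1)!\,H^{k-1}}(s-u_1)^{2(k-1)H}$, and then reduce the remaining one-dimensional integral via the change of variable $y=(s-u_1)/(t-s)$ to the quantity $\beta_A$ of Eq.~(\ref{IntA}) with $A=s/(t-s)$, bounded by $\beta_{k,H}$. The only difference is cosmetic: the paper names the pointwise kernel bounds $\varphi^{(1)}_{u_1}$ and $\varphi^{(2)}_{u_h}$ explicitly before integrating, whereas you describe the same estimates in prose.
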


\begin{proof}
	As a consequence, of Lemma \ref{KernelBound1}, we have
	
	\begin{equation}
		|J_{0s}(\nu)| \leq c_H^{2k} \int_{0 < u_1 < \cdots < u_k < s} 
		\varphi_{u_1}^{(1)} \prod_{h=2}^k \varphi_{u_h}^{(2)} \, du_1 \cdots du_k,
	\end{equation}
	where
	\[
	\varphi_{u_1}^{(1)} = \left[(s - u_1)^{H - 1/2} - (t - u_1)^{H - 1/2} \right] 
	\left[(s - u_1)^{H - 1/2} + u_1^{H - 1/2} \right],
	\]
	and
	\[
	\varphi_{u_h}^{(2)} = \left[(s - u_h)^{H - 1/2} + u_h^{H - 1/2} \right]^2.
	\]
	Applying Lemma \ref{BoundG} for $b=1$ we get 
	$\int_{u_1 < u_2 < \cdots < u_k < s} \prod_{h=2}^{k} \varphi^{(2)}_{u_h} \, du_2 \cdots du_k \le \frac{1}{(k-1)!} (\frac{2}{H})^{k-1} (s - u_1)^{2(k - 1)H}
	$. Therefore, by making the change of variables $s - u_1 = v$ and $y = \frac{v}{t - s}$, we get
	
	\begin{align}
		|J_{0s}(\nu)| \le \, & \frac{1}{(k-1)!} (\frac{2c_H^2}{H})^{k-1}(t - s)^{2kH} \int_0^{s / (t - s)} 
		\left[ y^{H - \frac{1}{2}} - (1 + y)^{H - \frac{1}{2}} \right] \notag \\
		&\quad \times \left[ y^{H - \frac{1}{2}} + \left( \frac{s}{t - s} - y \right)^{H - \frac{1}{2}} \right]
		y^{2(k - 1)H} \, dy. \notag
	\end{align}
	
\end{proof}

\begin{remark}
	suppose that $\nu = (j_1, \dots ,j_{k-1}, 1)$. In this case 
	\[
	J_{0s}(\nu) = \int_{0 < u_1 < \cdots < u_k < s} \partial Z_{u_1}^1 \cdots \partial Z_{u_{k-1}}^{k - 1} K(\tau_j, u_k) \partial W_u(i_j),
	\]
	then from Ito isometry and Lemma \ref{KernelBound1} we get
	\begin{align*} 
		\mathbb{E}[J_{0s}(\nu)^2] &= \int_0^s \mathbb{E}(J_{0u}(\nu')^2) K(\tau_j, u)^2 \, du \\
		&\leq c_H^2 \int_0^s \mathbb{E}(J_{0u}(\nu')^2) \left( (s - u)^{2H - 1} + u^{2H - 1} \right) \, du,
	\end{align*}
	with $\nu'=(j_1,j_2,\ldots,j_{k-1})$.
\end{remark}

Now we consider the next possible case.

\begin{lemma}
	
Now we consider the case where $\nu = (j_1,\dots,j_{b-1},j_b, j_{b+1}, \dots,j_k)$ with $2 \le b$. Set $j_b = 1$ and $j_{b+1} = j_{b+2} = \dots = j_k = 2$. Then we have
\begin{align*}
	\mathbb{E}[J_{0s}(\nu)^2] 
\leq c_H^{2k-2b} \int_0^s \mathbb{E}[J_{0u_b}(\nu'')^2] \left[(s - u_b)^{2H - 1} + u_b^{2H - 1}\right](s - u_b)^{4(k - b)H} \, du_b.
\end{align*}
	
\end{lemma}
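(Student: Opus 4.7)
The plan is to exploit the specific block structure of $\nu$: the level-$b$ entry $j_b = 1$ is a Wiener differential while all subsequent entries $j_{b+1} = \cdots = j_k = 2$ are deterministic $du$-integrals. This splits the $k$-fold iterated integral into a pre-$b$ piece (which reconstructs $J_{0u_b}(\nu'')$ with $\nu'' = (j_1,\ldots,j_{b-1})$), a single $dW_{u_b}$ integration at level $b$, and a deterministic tail from levels $b+1$ through $k$ that is exactly the function $G(u_b)$ of Lemma \ref{BoundG}. Once this decomposition is in place, the proof reduces to one application of the Ito isometry followed by the two kernel bounds already established.

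Concretely, the steps I would carry out in order are the following. First, unfold the nested iterated integral so as to write
\[
J_{0s}(\nu) = \int_0^s J_{0u_b}(\nu'')\, K(\tau_{m(b)}, u_b)\, G(u_b)\, dW_{u_b}^{i_{m(b)}},
\]
using that $\partial Z^b_{u_b} = K(\tau_{m(b)}, u_b)\, dW_{u_b}^{i_{m(b)}}$ since $n_b=1$, and that for $h>b$ the differential $\partial Z^h_{u_h}$ is the deterministic product $K(\tau_{m(h)-1}, u_h)\,K(\tau_{m(h)}, u_h)\, du_h$ (which collects into $G(u_b)$ after carrying out the $du_{b+1}\cdots du_k$ integration). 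Second, since both $J_{0u_b}(\nu'')$ and $G(u_b)$ are $\mathcal{F}_{u_b}$-measurable, the Ito isometry yields
\[
\mathbb{E}[J_{0s}(\nu)^2] = \int_0^s \mathbb{E}[J_{0u_b}(\nu'')^2]\, K(\tau_{m(b)}, u_b)^2\, G(u_b)^2\, du_b.
\]
Third, apply Lemma \ref{KernelBound1} together with the monotonicity $(\tau - u_b)^{H-1/2} \leq (s - u_b)^{H-1/2}$ for $s\leq \tau\leq t$ (valid because $H-1/2<0$), and expand the resulting square to bound $K(\tau_{m(b)}, u_b)^2$ by a constant multiple of $(s-u_b)^{2H-1} + u_b^{2H-1}$. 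Fourth, apply Lemma \ref{BoundG} to bound $G(u_b)^2$ by a constant multiple of $(s-u_b)^{4(k-b)H}$. Substituting both estimates back into the isometry identity and collecting the numeric constants into the single $c_H$-prefactor produces exactly the stated inequality.

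The main obstacle I anticipate is the very first step: correctly separating the iterated Stratonovich/Ito structure so that $J_{0u_b}(\nu'')$ emerges naturally as the pre-$b$ block and $G(u_b)$ as the post-$b$ block, while keeping track of the $\tau_{m(h)}$ labels that select $s$ versus $t$. Once the bookkeeping is done and the integrand of the $dW_{u_b}$-integral is identified as the product of an $\mathcal{F}_{u_b}$-measurable random part times a deterministic part, the rest is a routine application of the isometry combined with the two kernel estimates from Lemmas \ref{KernelBound1} and \ref{BoundG}.
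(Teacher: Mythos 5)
Your proposal follows essentially the same route as the paper: rewrite $J_{0s}(\nu)$ via Fubini as a single $dW_{u_b}$-integral of $J_{0u_b}(\nu'')\,K(\tau_{m(b)},u_b)\,G(u_b)$, apply the Ito isometry, and then invoke Lemma~\ref{KernelBound1} for the kernel factor and Lemma~\ref{BoundG} for the deterministic tail. The only (shared) looseness is in the bookkeeping of multiplicative constants, which neither your sketch nor the paper reconciles precisely with the stated prefactor $c_H^{2k-2b}$.
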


\begin{proof}
In this case applying Fubini's theorem 'we have 
\[
J_{0s}(\nu) = \int_0^s J_{0u_b}(\nu') K(\tau_{m(h)}, u_b) G(u_b) \, dW_{u_b}^{i_{m(h)}},
\]

with $\nu' = (j_1, \ldots, j_{b-1})$, and where
\[
G(u_b) = \int_{0 < u_b < u_{b+1} < \cdots < u_k < s} \prod_{h=b+1}^k K(\tau_{m(h)-1}, u_h) K(\tau^{m(h)}, u_h) \, du_{b+1} \cdots du_k.
\]

From Lemma \ref{BoundG} for $G(u_b)$ and Ito isometry we get 
\begin{align*}
	\mathbb{E}[J_{0s}(\nu)^2] 
	&= \int_0^s \mathbb{E}[J_{0u_b}(\nu')^2] \, K(\tau_{m(h)}, u_b)^2 \, G(u_b)^2 \, du_b \\
	&\leq c_H^{2k-2b} \int_0^s \mathbb{E}[J_{0u_b}(\nu')^2] \left[(s - u_b)^{2H - 1} + u_b^{2H - 1}\right](s - u_b)^{4(k - b)H} \, du_b.
\end{align*}

\end{proof}

\begin{lemma}
	If $\varphi^{(2)}_{u_h} = \left[ (s - u_h)^{H - 1/2} + u_h^{H - 1/2} \right]^2,$ then    
	\[
	\int_{u_1 < \cdots < u_k < s} \prod_{h=2}^{k} \varphi^{(2)}_{u_h} \, du_2 \cdots du_k \leq \frac{2^{k-1}}{(k-1)!} \int_{[u_1, s]^{k-1}} \prod_{h=2}^{k} \left[ (s - u_h)^{2H - 1} + (u_h - u_1)^{2H - 1} \right] \, du_2 \cdots du_k.
	\]
	
\end{lemma}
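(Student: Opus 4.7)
The plan is to reduce the lemma to two elementary facts combined with the simplex-to-cube symmetrization already recorded in Remark \ref{SimplexCubeSymmetricFunc}. Pointwise in $u_h$, I would first expand the square and apply the elementary inequality $(a+b)^2 \le 2(a^2+b^2)$ to get
\[
\varphi^{(2)}_{u_h} \;=\; \left[(s-u_h)^{H-1/2} + u_h^{H-1/2}\right]^2 \;\le\; 2\left[(s-u_h)^{2H-1} + u_h^{2H-1}\right].
\]
This replaces the exponent $H-1/2$ by $2H-1$ at the cost of a factor $2$ per variable, producing a total factor $2^{k-1}$ once I take the product over $h=2,\dots,k$.

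The next step replaces $u_h^{2H-1}$ by $(u_h-u_1)^{2H-1}$, which is the crucial move matching the desired right-hand side. Since $H<1/2$, the exponent $2H-1$ is negative, so the map $x\mapsto x^{2H-1}$ is decreasing on $(0,\infty)$. Because on the region of integration one has $0<u_1\le u_h$, the inequality $u_h - u_1 \le u_h$ gives $u_h^{2H-1}\le (u_h-u_1)^{2H-1}$. Multiplying these bounds over $h=2,\dots,k$ yields
\[
\prod_{h=2}^{k}\varphi^{(2)}_{u_h} \;\le\; 2^{k-1}\prod_{h=2}^{k}\left[(s-u_h)^{2H-1} + (u_h-u_1)^{2H-1}\right].
\]

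Finally, viewing $u_1$ as fixed, the right-hand integrand is a product of identical functions of the remaining variables $u_2,\dots,u_k$, hence symmetric under any permutation of those variables. Remark \ref{SimplexCubeSymmetricFunc}, applied on the interval $[u_1,s]$ in dimension $k-1$, allows me to relate the integral over the ordered simplex $\{u_1<u_2<\cdots<u_k<s\}$ to the integral over the cube $[u_1,s]^{k-1}$ by a factor $1/(k-1)!$. Combining this with the pointwise bound produces exactly the claimed inequality with the constant $2^{k-1}/(k-1)!$.

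I do not expect any real obstacle here: the argument is a short chain of elementary manipulations. The only slightly delicate point is justifying the replacement of $u_h^{2H-1}$ by $(u_h-u_1)^{2H-1}$, which relies specifically on the sign of $2H-1$ being negative in the regime $H<1/2$ considered throughout this section; the symmetrization step is otherwise a direct invocation of Remark \ref{SimplexCubeSymmetricFunc}.
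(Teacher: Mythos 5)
Your proposal is correct and follows essentially the same route as the paper: the pointwise bound $(a+b)^2 \le 2(a^2+b^2)$ giving the factor $2^{k-1}$, the replacement $u_h^{2H-1} \le (u_h-u_1)^{2H-1}$ justified by $2H-1<0$ and $u_h-u_1\le u_h$, and the simplex-to-cube symmetrization of Remark \ref{SimplexCubeSymmetricFunc} yielding the $1/(k-1)!$. No gaps.
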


\begin{proof}
	
	Using $ (a+b)^2 \leq 2a^2 + 2b^2 $, we have
	$
	\varphi^{(2)}_{u_h} = (s - u_h)^{2H - 1} + 2(s - u_h)^{H - 1/2} u_h^{H - 1/2} + u_h^{2H - 1} \leq 2(s - u_h)^{2H - 1} + 2u_h^{2H - 1}
	$
	Moreover, since $ u_h > u_1 $ and $ 2H - 1 < 0 $ (as $ 0 < H < \frac{1}{2} $), then $u_h^{2H - 1} \leq (u_h - u_1)^{2H - 1}$.
	Therefore:
	\[
	\prod_{h=2}^k \varphi^{(2)}_{u_h} \leq 2^{k-1} \prod_{h=2}^k \left[ (s - u_h)^{2H - 1} + (u_h - u_1)^{2H - 1} \right],
	\]
	
	Note that the ordered integral $ {u_1 < \cdots < u_k < s} $ is bounded by the larger domain $ {[u_1, s]^{k-1}} $.
	We also need to relate the integral over the ordered simplex to the integral over the cube. 
	Now we need to apply Remark \ref{SimplexCubeSymmetricFunc}. In our case, the integrand on the right,
	\[
	g(u_2, \ldots, u_k) = \prod_{h=2}^{k} \left[ (s - u_h)^{2H - 1} + (u_h - u_1)^{2H - 1} \right],
	\]
	is symmetric because permuting the variables $ u_2, \ldots, u_k $ only reorders the factors in the product, leaving the product unchanged. 
	Rearranging, we obtain:
	
	\[
	\int_{u_1 < \cdots < u_k < s} g(u_2, \ldots, u_k) \, du_2 \cdots du_k = \frac{1}{(k-1)!} \int_{[u_1, s]^{k-1}} g(u_2, \ldots, u_k) \, du_2 \cdots du_k.
	\]
	This justifies dividing by $ \frac{1}{(k-1)!} $, as the symmetry ensures that each of the $(k-1)!$ ordered regions contributes equally to the cube integral.
\end{proof}

\begin{lemma}\label{LemmaExpectationJDoubleintBound}
	Let $\nu = (j_1,\ldots,j_c,j_{c+1},\ldots,j_{b-1},j_b,j_{b+1},\ldots,j_k)$ the sequence of indices $ j_k, j_{k-1}, \ldots, j_{b+1} = 2 $, $ j_b = 1 $, $ j_{b-1}, \ldots, j_{c+1} = 2 $, and $ j_c = 1 $, where $ 2 \leq c \leq b $, and $ \mathbb{E}[J_{0s}(\nu)^2] $ incorporated by two integration stages corresponding to the transitions at $ j_b = 1 $ and $ j_c = 1 $. Then	
\begin{equation}
		\mathbb{E}\left[J_{0s}(\nu)^2\right] 
		\leq C_{\text{single}} \int_0^s \mathbb{E}\left[J_{0u_c}(\nu')^2\right] 
		\left[ (s - u_c)^{2H - 1} + u_c^{2H - 1} \right]  \times (s - u_c)^{4(k - c)H + 2H} \, du_c,
	\end{equation}
	where 
	\[
	C_{\text{single}} =2 \frac{2^{k - c}}{(k - c)!}\frac{(2c_H)^{2k-2c}}{H^{2k-2c}}.
	\]
	
\end{lemma}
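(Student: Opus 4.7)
The plan is to extend the Fubini-plus-Ito-isometry pattern used in the proof of the previous lemma by iterating it twice, once for each of the two Wiener integrations in $\nu$ (at positions $c$ and $b$).

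As a first step, I peel off the outermost Wiener integration at $u_b$. The positions $b+1, \ldots, k$ all carry $j=2$, so by Fubini they contribute the deterministic factor $G(u_b)$ of Lemma \ref{BoundG}, and the surviving outer stochastic differential is $dW_{u_b}^{i_{m(b)}}$. Applying Ito isometry, as in the previous lemma, gives
\begin{equation*}
\mathbb{E}[J_{0s}(\nu)^2] \;=\; \int_0^s \mathbb{E}[J_{0u_b}(\tilde\nu)^2] \, K(\tau_{m(b)}, u_b)^2 \, G(u_b)^2 \, du_b,
\end{equation*}
with $\tilde\nu = (j_1, \ldots, j_{b-1})$. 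Now $\tilde\nu$ itself ends with a block of $j=2$ indices ($j_{c+1}=\cdots=j_{b-1}=2$) terminated by the single $j_c = 1$, so it has exactly the structure treated by the previous lemma. I therefore repeat the peeling on $J_{0u_b}(\tilde\nu)$: Fubini over the deterministic block $c+1, \ldots, b-1$ produces a second deterministic factor $G_{\mathrm{mid}}(u_c, u_b)$, and Ito isometry gives
\begin{equation*}
\mathbb{E}[J_{0u_b}(\tilde\nu)^2] \;=\; \int_0^{u_b} \mathbb{E}[J_{0u_c}(\nu')^2] \, K(\tau_{m(c)}, u_c)^2 \, G_{\mathrm{mid}}(u_c, u_b)^2 \, du_c,
\end{equation*}
with $\nu' = (j_1, \ldots, j_{c-1})$.

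Combining the two identities, swapping the order of integration, and pulling the $u_c$-factor outside reduces the problem to bounding
\begin{equation*}
K(\tau_{m(c)}, u_c)^2 \int_{u_c}^s G(u_b)^2 \, G_{\mathrm{mid}}(u_c, u_b)^2 \, K(\tau_{m(b)}, u_b)^2 \, du_b
\end{equation*}
by $\mathrm{const}\times [(s-u_c)^{2H-1}+u_c^{2H-1}] \, (s-u_c)^{4(k-c)H+2H}$. For this I would use Lemma \ref{KernelBound1} together with $(a+b)^2 \leq 2a^2+2b^2$ on the two kernel-squared factors, Lemma \ref{BoundG} to control $G(u_b)^2$ by a multiple of $(s-u_b)^{4(k-b)H}$, and the analogous argument on $[u_c, u_b]$ (bounding each deterministic $K\cdot K$ factor via Lemma \ref{KernelBound1} and reducing the simplex to the cube as in the proof of Lemma \ref{BoundG}) to control $G_{\mathrm{mid}}(u_c, u_b)^2$ by a multiple of $(u_b-u_c)^{4(b-c-1)H}$. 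The remaining $u_b$-integral is a beta-type integral handled by the substitution $u_b = u_c + v(s-u_c)$, which, combined with the power from $K(\tau_{m(b)}, u_b)^2$ and the $du_b$ Jacobian, produces the required power of $(s-u_c)$; collecting the numerical constants from Lemma \ref{BoundG}, from the kernel bound, and from the two factors of $2$ in $(a+b)^2\leq 2a^2+2b^2$ yields the prefactor $C_{\mathrm{single}} = 2\cdot \frac{2^{k-c}}{(k-c)!}\cdot \frac{(2c_H)^{2k-2c}}{H^{2k-2c}}$.

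The main obstacle is the bookkeeping: tracking powers of $(s-u_c)$, $(u_b-u_c)$ and $(s-u_b)$ through the two successive Fubini-plus-Ito-isometry steps and reconciling the exponents produced by Lemma \ref{BoundG} with the contributions of the beta-type integration in $u_b$, so that the final exponent collapses to $4(k-c)H+2H$ and the constants match $C_{\mathrm{single}}$ exactly. A secondary subtlety is the choice of majorization for terms of the form $u_h^{2H-1}$ on the interior interval $[u_c, u_b]$ (between $u_h^{2H-1}\leq u_c^{2H-1}$ and $u_h^{2H-1}\leq (u_h-u_c)^{2H-1}$), since either is valid but they lead to slightly different intermediate bounds even though the final scaling agrees.
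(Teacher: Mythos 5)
Your proposal follows essentially the same route as the paper: two successive Fubini-plus-Ito-isometry peelings at $u_b$ and then $u_c$, with the deterministic blocks controlled by Lemmas \ref{KernelBound1} and \ref{BoundG}, followed by reduction of the resulting double integral over $0<u_c<u_b<s$ to a single $u_c$-integral via a Beta-function evaluation of the inner $u_b$-integral. The only divergence is the exponent bookkeeping you flag as the main obstacle (the paper records the middle block's contribution as $(u_b-u_c)^{4(b-c)H}$ rather than your $(u_b-u_c)^{4(b-c-1)H}$, and bounds the Beta factors by $1$ under an auxiliary assumption on their arguments).
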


\begin{proof}
We perform the proof in two steps. In the first step we show that 
	\begin{align}\label{ExpectationJDoubleintBound}
		\mathbb{E}[J_{0s}(\nu)^2] 
		&\leq \frac{2^{k - c}}{(k - c)!} 4c_H^4 \frac{(2c_H)^{2k-2c}}{H^{2k-2c}}  \int_{0 < u_c < u_b < s} 
		\mathbb{E}[J_{0u_c}(\nu'')^2] 
		\left[(u_b - u_c)^{2H - 1} + u_c^{2H - 1} \right] \notag \\
		&\quad \times (u_b - u_c)^{4(b - c)H} 
		\left[(s - u_b)^{2H - 1} + u_b^{2H - 1} \right] \notag \\
		&\quad \times (s - u_b)^{4(k - b)H} \, du_c \, du_b,
	\end{align}

where $\nu'' = (j_1,\ldots,j_{c-1})$.

	The goal is to compute $ \mathbb{E}[J_{0s}(\nu)^2] $ by incorporating two integration stages corresponding to the transitions at $ j_b = 1 $ and $ j_c = 1 $.
	Applying Ito isometry and Fubini's theorem we get:
	\begin{align*}
		\mathbb{E}[J_{0s}(\nu)^2] &= \int_0^{s} \mathbb{E}[J_{0u_b}(\nu')^2] K(\tau_{m(h)}, u_b)^2 G(u_b)^2 \, du_b \\
		& \leq C_1 \int_0^s \mathbb{E}\left[J_{0u_b}(\nu')^2\right] 
		\left[(s - u_b)^{2H - 1} + u_b^{2H - 1}\right](s - u_b)^{4(k - b)H} \, du_b.
	\end{align*}
	where $\nu' = (j_1,\ldots,j_{b-1})$ and inequality results from Lemma \ref{KernelBound1} and \ref{BoundG} with constant
	\[
	C_1 = \frac{1}{(k-b)!} \frac{(2c_H)^{2k-2b}}{H^{2k-2b}}.
	\]
	with the same technique we get
	\[
	\mathbb{E}[J_{0u_b}(\nu')^2] \leq C_2 \int_0^{u_b} \mathbb{E}[J_{0u_c}(\nu'')^2] \left[(u_b - u_c)^{2H - 1} + u_c^{2H - 1}\right](u_b - u_c)^{4(b - c)H} \, du_c,
	\]
	where 
	\begin{equation}\label{simplification}
	C_2 = \frac{1}{(k-b)!}\frac{1}{(b-c)!} \frac{(2c_H)^{2k-2c}}{H^{2k-2c}} \le \frac{2^{k-c}}{(k-c)!} \frac{(2c_H)^{2k-2c}}{H^{2k-2c}},
	\end{equation}
because
	\[
	\frac{1}{(k - b)! (b - c)!} = \frac{1}{(k - c)!} \binom{k - c}{k - b} \le \frac{2^{k-c}}{(k-c)!}.
	\]

	Therefore, we obtain the desired inequality. 
%

In the next lemma we aim to transform double integration in Equation (\ref{ExpectationJDoubleintBound}) to one integration in the following lemma.

\begin{lemma}
	With condition in Lemma \ref{LemmaExpectationJDoubleintBound}
	below inequality holds:
	\begin{align}
		\mathbb{E}\left[J_{0s}(\nu)^2\right] 
		&\leq C_{\text{single}} \int_0^s \mathbb{E}\left[J_{0u_c}(\nu')^2\right] 
		\left[ (s - u_c)^{2H - 1} + u_c^{2H - 1} \right] \notag \\
		&\quad \times (s - u_c)^{4(k - c)H + 2H} \, du_c.
	\end{align}
		where 
	\[
	C_{\text{single}} =  \frac{2^{k - c}}{(k - c)!}\frac{(2c_H)^{2k-2c}}{H^{2k-2c}}.
	\]
\end{lemma}

	Integral in (\ref{ExpectationJDoubleintBound}) is a double integral over the region $ 0 < u_c < u_b < s $.
	\begin{align*}
		\mathbb{E}[J_{0s}(\nu)^2] &\leq \frac{2^{k-c}}{(k-c)!} \frac{(2c_H)^{2k-2c}}{H^{2k-2c}} \int_{0 < u_c < u_b < s} \mathbb{E}[J_{0u_c}(\nu')^2] \left[(u_b - u_c)^{2H - 1} + u_c^{2H - 1}\right] \\
		& \times (u_b - u_c)^{4(b - c)H} \left[(s - u_b)^{2H - 1} + u_b^{2H - 1}\right] (s - u_b)^{4(k - b)H} \, du_c \, du_b,
	\end{align*}
	Since $ u_b^{2H - 1} \le (u_b - u_c)^{2H - 1} $ for $0<H<1/2$ we can write:
	\begin{align*}
		\mathbb{E}[J_{0s}(\nu)^2] &\leq \frac{2^{k-c}}{(k-c)!} \frac{(2c_H)^{2k-2c}}{H^{2k-2c}} \int_{0 < u_c < u_b < s} \mathbb{E}[J_{0u_c}(\nu')^2] \left[ (u_b - u_c)^{2H - 1} + u_c^{2H - 1} \right] \\
		&\times (u_b - u_c)^{4(b - c)H} \left[ (s - u_b)^{2H - 1} + (u_b - u_c)^{2H - 1} \right] (s - u_b)^{4(k - b)H} \, du_c \, du_b.
	\end{align*}
	We rewrite the double integral as:
	\begin{align*}
		&\int_0^s \int_{u_c}^s \mathbb{E}[J_{0u_c}(\nu')^2] \left[ (u_b - u_c)^{2H - 1} + u_c^{2H - 1} \right] (u_b - u_c)^{4(b - c)H} 
		\\ 
		&\times \left[ (s - u_b)^{2H - 1} + (u_b - u_c)^{2H - 1} \right] (s - u_b)^{4(k - b)H} \, du_b \, du_c.
	\end{align*}
	The inner integral is:
	\begin{align*}
		I(u_c) & = \int_{u_c}^s \left[ (u_b - u_c)^{2H - 1} + u_c^{2H - 1} \right] (u_b - u_c)^{4(b - c)H} \\
		& \times \left[ (s - u_b)^{2H - 1} + (u_b - u_c)^{2H - 1} \right] (s - u_b)^{4(k - b)H} \, du_b.
	\end{align*}
	by expanding integrand 
	we yield four terms:
	\begin{align*}
		& \left[ (u_b - u_c)^{2H - 1} (s - u_b)^{2H - 1} + (u_b - u_c)^{4H - 2} + u_c^{2H - 1} (s - u_b)^{2H - 1} + u_c^{2H - 1} (u_b - u_c)^{2H - 1} \right] \\
		& \times (u_b - u_c)^{4(b - c)H} (s - u_b)^{4(k - b)H}.
	\end{align*}
	
	Substitute $ t = u_b - u_c $, so $ u_b = t + u_c $, $ du_b = dt $, $ s - u_b = (s - u_c) - t $, with limits $ t $ from 0 to $ s - u_c $. The integral becomes:
	\begin{align*}
		I(u_c) &= \int_0^{s - u_c}  t^{2H - 1 + 4(b - c)H} [(s - u_c) - t]^{2H - 1 + 4(k -b)H} dt + t^{4H - 2 + 4(b - c)H} [(s - u_c) - t]^{4(k - b)H} dt \\
		&+ u_c^{2H - 1} t^{4(b - c)H} [(s - u_c) - t]^{2H - 1 dt + 4(k - b)H} + u_c^{2H - 1} t^{2H - 1 + 4(b - c)H} [(s - u_c) - t]^{4(k - b)H}  dt.
	\end{align*}
	
	Use the Beta function, $ \int_0^a t^p (a - t)^q \, dt = a^{p + q + 1} B(p + 1, q + 1) $:

	\begin{itemize}
		\item First term: $ (s - u_c)^{4H - 1 + 4(k - c)H} B(2H + 4(b - c)H, 2H + 4(k - b)H) \\ = (s - u_c)^{4H - 1 + 4(k - c)H} B_1 $ ,
		
		\item Second term: $ (s - u_c)^{4H - 1 + 4(k - c)H} B(4H - 1 + 4(b - c)H, 4(k - b)H + 1) \\ = (s - u_c)^{4H - 1 + 4(k - c)H} B_2 $,
		
		\item Third term: $ u_c^{2H - 1} (s - u_c)^{2H + 4(k - c)H} B(4(b - c)H + 1, 2H + 4(k - b)H) \\ = u_c^{2H - 1} (s - u_c)^{2H + 4(k - c)H} B_3 $,
		
		\item Fourth term: $ u_c^{2H - 1} (s - u_c)^{2H + 4(k - c)H} B(2H + 4(b - c)H, 4(k - b)H + 1) \\ = u_c^{2H - 1} (s - u_c)^{2H + 4(k - c)H} B_4 $.

	\end{itemize}
	
	Therefore, since we can factorize $(s - u_c)^{2H-1}$ in the first and second term and $u_c^{2H - 1}$ in the third and fourth   term we have

	\[
	I(u_c)\leq C \int_0^s  \left[ (s - u_c)^{2H - 1} + u_c^{2H - 1} \right] (s - u_c)^{4(k - c)H + 2H} \, du_c,
	\]
	where $C = 2 \max \{B_1, B_2, B_3, B_4\} $.  Form \cite{ALZERSharpInequality} we know that $ B(x, y) \leq \frac{1}{xy} \le 1 $ for $x,y\ge 1$.
	If we assume $H$ is large enough then we get 
	\[
	\max \{ B_1, B_2, B_3, B_4 \} \leq 1
	\] 
If we assume every beta function argument is larger than 1. In this case we can simply use bound equal to "one". Therefore we have:
		\[
		C_{\text{single}}  =2 \frac{2^{k - c}}{(k - c)!} \frac{(2c_H)^{2k-2c}}{H^{2k-2c}}  
		\]

	\end{proof}
	
	Now consider below 
	\[
	\mathbb{E}[J_{0s}(\nu)^2] \leq C_{\text{single}}  \int_0^s \mathbb{E}[J_{0u_c}(\nu')^2] \left[ (s-u_c)^{2H-1} + u_c^{2H-1} \right] (s-u_c)^{4(k-c)H+2H} \, du_c.
	\]
	
	To get above inequality we need to perform at most  $(k-r-2)/2$ . By induction: 
	\[
	\mathbb{E}[J_{0s}(\nu)^2] \leq C_I \int_0^s \mathbb{E}[J_{0u}(\nu')^2] \left[(s-u)^{2H-1} + u^{2H-1} \right] (s-u)^{2H\sum_{l=r(\nu)+2}^k j_l} \, du.
	\]
	
	If we do not use simplification in Equation (\ref{simplification}), then   $C_I(\nu)$ will be obtained from the last step that
	\[
	C_I= (\frac{2c_H}{H})^{2k-2r}.
	\]

	
Now we can focus on the general $\nu$. By iteration we achieve 
		\begin{equation*}
	\mathbb{E}[J_{0s}(\nu)^2] \leq  C_I \int_0^s \mathbb{E}[J_{0u}(\nu')^2] \left[(s - u)^{2H - 1} + u^{2H - 1}\right] 
	\times (s - u)^{2H \sum_{\ell = r+2}^k j_\ell} \, du, 
\end{equation*}

where  $\nu'$ are obtain a multi-index of length $r$ and can be either $(1,2,\dots,2)$  or $(2,\dots,2)$ with $j_{r+1}=1$. Now we investigate both cases. Assume  $\nu'= (1,2,\dots,2)$. In this case we get 
\[
J_{0s}(\nu') = \int_0^u [K(t, u_1) - K(s, u_1)] F(u_1) \, dW_{u_1}^{i_1},
\]
where 
\[
F(u_1) = \int_{u_1 < u_2 < \cdots < u_r < u} \prod_{h=2}^r K(\tau_{m(h)-1}, u_h) K(\tau_{m(h)}, u_h) \, du_2 \cdots du_r.
\]
From Lemma \ref{BoundG}	we know that 
\[
|F(u_1)| \leq C_F (u - u_1)^{2(r - 1)H}.
\]
where $C_F = \frac{(2c_H)^{k-1}}{H^{k-1}}$.

	\begin{lemma}
		Assume 
		\begin{equation*}
			\mathbb{E}[J_{0s}(\nu)^2] \leq C_I \int_0^s \mathbb{E}[J_{0u}(\nu')^2] \left[(s - u)^{2H - 1} + u^{2H - 1}\right] 
			\times (s - u)^{2H \sum_{\ell = r+2}^k j_\ell} \, du, 
		\end{equation*}
 and 
		\begin{equation*}
			\mathbb{E}[J_{0s}(\nu')^2] \leq C_F \int_0^u \left[(t - u_1)^{H - 1/2} - (s - u_1)^{H - 1/2}\right]^2 (u - u_1)^{4(r - 1)H} \, du_1,
		\end{equation*}
then
		\[
		\mathbb{E}[|J_{0s}(v)|^2] \leq C_I C_F \int_0^s \left[ (t - u)^{H - 1/2} - (s - u)^{H - 1/2} \right]^2 (s - u)^{2(j-1)H} \, du.
		\]
	\end{lemma}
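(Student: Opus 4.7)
The plan is to chain the two hypotheses together into a single bound by substitution, then reduce the resulting double integral to a single integral in $u$ that matches the target form. First I would substitute the bound on $\mathbb{E}[J_{0u}(\nu')^2]$ directly into the first inequality to obtain
\[
\mathbb{E}[J_{0s}(\nu)^2] \le C_I C_F \int_0^s\!\!\int_0^u \bigl[(t-u_1)^{H-\frac12}-(s-u_1)^{H-\frac12}\bigr]^2 (u-u_1)^{4(r-1)H}\bigl[(s-u)^{2H-1}+u^{2H-1}\bigr](s-u)^{2H\sum_{\ell=r+2}^k j_\ell}\,du_1\,du,
\]
which is a double integral over the triangle $0<u_1<u<s$.

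Next I would apply Fubini's theorem to push $u_1$ outside; the factor $[(t-u_1)^{H-1/2}-(s-u_1)^{H-1/2}]^2$ is independent of $u$ and already matches the shape of the desired integrand, so the task reduces to estimating, for each fixed $u_1\in(0,s)$, the inner integral
\[
\mathcal{I}(u_1) \;=\; \int_{u_1}^s (u-u_1)^{4(r-1)H}\bigl[(s-u)^{2H-1}+u^{2H-1}\bigr](s-u)^{2H\sum_{\ell=r+2}^k j_\ell}\,du.
\]
I would split $\mathcal{I}(u_1)$ into the two summands. For the $(s-u)^{2H-1}$ piece, a Beta-function change of variable $u=u_1+(s-u_1)x$ gives a product of $(s-u_1)^{2(k-1)H}$ (using the identity $\sum_{\ell=r+2}^k j_\ell + 2(r-1) + 1 = 2k-1$ that comes from the structure of $\nu$) times a Beta integral; by the same estimate $B(x,y)\le 1$ used earlier in the paper, this is bounded by $(s-u_1)^{2(k-1)H}$ up to a constant that has already been absorbed into $C_I$. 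For the $u^{2H-1}$ piece I would use the elementary inequality $u^{2H-1}\le(u-u_1)^{2H-1}$ valid because $u>u_1$ and $2H-1<0$; this reduces the second summand to the same Beta-type integral and therefore to the same power $(s-u_1)^{2(k-1)H}$.

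Putting these two estimates together yields $\mathcal{I}(u_1)\le (s-u_1)^{2(j-1)H}$ up to a constant (where the exponent matches the $2(j-1)H$ in the statement under the paper's indexing convention for $\nu$), so renaming $u_1\to u$ in the outer integral produces exactly
\[
\mathbb{E}[|J_{0s}(\nu)|^2] \le C_I C_F \int_0^s \bigl[(t-u)^{H-\frac12}-(s-u)^{H-\frac12}\bigr]^2 (s-u)^{2(j-1)H}\,du,
\]
as desired. The main obstacle I anticipate is the exponent bookkeeping: one has to verify that the sum $4(r-1)H + 2H\sum_{\ell=r+2}^k j_\ell + 2H$ coming out of the Beta integral indeed collapses to the target power $2(j-1)H$ under the structural constraint $\sum_\ell j_\ell = n$ on $\nu$. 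The analytic content (Fubini, the inequality $u^{2H-1}\le(u-u_1)^{2H-1}$, and the bound $B(\cdot,\cdot)\le 1$ for arguments $\ge 1$) is straightforward and mirrors the manipulations already carried out in the proofs of Lemmas above.
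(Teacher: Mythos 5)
Your proposal follows essentially the same route as the paper's proof: substitute one bound into the other, swap the order of integration by Fubini, use $u^{2H-1}\le (u-u_1)^{2H-1}$ on the second summand, evaluate the inner integral via the Beta function after the change of variable $u = u_1 + t$, and bound the Beta factors by $1$ (which, as in the paper, requires the arguments to be at least $1$, i.e.\ an implicit restriction on $H$). The exponent bookkeeping $4(r-1)H + 2H\sum_{\ell=r+2}^{k} j_\ell + 2H = 2(j-1)H$ that you flag as the main risk is exactly the identity the paper invokes, so your argument is correct and matches the paper's.
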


	\begin{proof}
		
		by changing the order of integration we have:		
		\[
		\int_0^s \left[ (t - u_1)^{H - 1/2} - (s - u_1)^{H - 1/2} \right]^2\int_{u_1}^s  (u - u_1)^{4(r-1)H} \left[ (s - u)^{2H-1} + u^{2H-1} \right] (s - u)^{2H \sum_{l=r+2}^{k} j_l} \, du \, du_1.
		\]
		We consider inner integral and try to simplify it
		\begin{align*}
			I(u_1) &= \int_{u_1}^s (u - u_1)^{4(r-1)H} \left[ (s - u)^{2H-1} + u^{2H-1} \right] (s - u)^{2H \sum_{l=r+2}^{k} j_l} \, du \\
			&\le  \int_{u_1}^s (u - u_1)^{4(r-1)H} \left[ (s - u)^{2H-1} + (u-u_1)^{2H-1} \right] (s - u)^{2H \sum_{l=r+2}^{k} j_l} \, du.
		\end{align*}
		By substituting $ t = u - u_1 $, $ u = t + u_1 $, $ du = dt $, we have
		
		\begin{align*}
			I(u_1) &= \int_0^{s - u_1} t^{4(r-1)H} \left[ (s - u_1 - t)^{2H-1} + t^{2H-1} \right] (s - u_1 - t)^{2H \sum_{l=r+2}^{k} j_l} \, dt \\
			&= \int_0^{s - u_1} t^{4(r-1)H} (s - u_1 - t)^{2H-1 + 2H \sum_{l=r+2}^{k} j_l} \, dt \\
			&+ \int_0^{s - u_1} t^{4(r-1)H} t^{2H-1} (s - u_1 - t)^{2H \sum_{l=r+2}^{k} j_l} \, dt.
		\end{align*}
		Using the fact that
		\[
		\int_0^a t^{p - 1}(a - t)^{q - 1} \, dt = a^{p + q - 1} B(p, q), \quad \text{for } p, q > 0,
		\]
		for the first term is equal to:
		\[
		(s - u_1)^{4(r-1)H + 2H \left(1 + \sum_{l=r+2}^{k} j_l\right)} B\left(4(r-1)H + 1, 2H + 2H \sum_{l=r+2}^{k} j_l\right).
		\]
		In the second term we have:
		\[
		(s - u_1)^{4(r-1)H + 2H \sum_{l=r+2}^{k} j_l} B\left(4(r-2)H , 2H \sum_{l=r+2}^{k} j_l + 1\right),
		\]
		and since $
		4(r-1)H + 2H \sum_{l=r+2}^{k} j_l + 2H = 2(j-1)H,
		$
		\[
		I(u_1) \leq (s - u_1)^{2(j-1)H} B_1 +  (s - u_1)^{2(j-1)H} B_2 \le \max(B_1,B_2) (s - u_1)^{2(j-1)H}.
		\]

		For simplicity, we assume  $2H(m+1)\ge 1$,  (This is true specially when we have $H>1/4$ and $m\ge 1$), and $(4r-2)H\ge 1$ therefore we have
		\[
		\max \{ B_1, B_2 \} \leq 1.
		\]
		
	\end{proof}

Therefore for the first case we get
	\[
	\begin{aligned}
		\mathbb{E}\left[J_{0s}(\nu)^2\right] &\leq C_I C_F (t-s)^{2jH} \int_0^{s/(t-s)} \left[ (1+y)^{H-1/2} - y^{H-1/2} \right]^2 y^{2(j-1)H} \, dy \\
		&\leq 	\alpha'  C_I C_F (t-s)^{2jH},
	\end{aligned}
	\]
and we just need to find an upper bound for $	\alpha'  = \int_0^{s/(t-s)} \left[ (1+y)^{H-1/2} - y^{H-1/2} \right]^2 y^{2(j-1)H} \, dy $. We do it in the next lemma.

	\begin{lemma}
	The below integral is bounded by
	\[
	\alpha' = \int_0^\infty \left[ y^{H - 1/2} - (1 + y)^{H - 1/2} \right]^2 y^{2(k - 1)H} \, dy \le \frac{1}{2kH} + \left( H - \frac{1}{2} \right)^2  \frac{1}{2 - 2kH},
	\]
	where we assume that $ 2kH < 1 $ and $ 0 < H < 1/2 $.
\end{lemma}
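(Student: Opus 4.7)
The plan is to split the domain of integration at $y=1$ and bound the two pieces separately, in close analogy to the proof of Lemma \ref{Alpha_int}. On $[0,1]$ the difficulty is the singularity of $y^{H-1/2}$ at the origin, whereas on $[1,\infty)$ the difficulty is securing integrable decay despite the polynomial weight $y^{2(k-1)H}$; the bound will contain one contribution from each regime, matching the two summands on the right-hand side of the claim.

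On $[0,1]$, since $H - 1/2 < 0$ the map $x \mapsto x^{H-1/2}$ is decreasing and strictly positive, so $0 \le (1+y)^{H-1/2} \le y^{H-1/2}$. This yields $\left[y^{H-1/2} - (1+y)^{H-1/2}\right]^2 \le y^{2H-1}$, and multiplying by the weight gives an integrand bounded by $y^{2kH - 1}$. Integrating over $[0,1]$ then produces $\frac{1}{2kH}$; finiteness only requires $2kH > 0$, which holds trivially, so the assumption $2kH < 1$ is not yet used on this piece.

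On $[1,\infty)$ the brute estimate just used is too crude because $y^{2kH-1}$ fails to be integrable at infinity. The plan is therefore to represent the difference by the fundamental theorem of calculus,
\begin{equation*}
y^{H-1/2} - (1+y)^{H-1/2} = \Bigl(\tfrac{1}{2} - H\Bigr) \int_y^{y+1} x^{H-3/2}\, dx,
\end{equation*}
and then exploit the fact that $x^{H-3/2}$ is positive and decreasing on $[y,y+1]$ to obtain the sharper bound $\bigl(\tfrac{1}{2} - H\bigr)\, y^{H-3/2}$. Squaring and multiplying by $y^{2(k-1)H}$ gives an integrand dominated by $\bigl(H - \tfrac{1}{2}\bigr)^2 y^{2kH - 3}$; since $2kH < 1 < 2$, the exponent lies strictly below $-1$ and the tail integral converges to $\frac{1}{2 - 2kH}$, yielding the second summand and the coefficient $\bigl(H - \tfrac{1}{2}\bigr)^2$ appearing in the statement.

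The main obstacle, such as it is, lies precisely in the choice of the splitting and of the sharper estimate on the tail: a naive single bound by $y^{H-1/2}$ on all of $[0,\infty)$ explodes at infinity, and a naive bound by $(1+y)^{H-1/2}$ is too small to capture the singularity at the origin. The mean-value-type identity above is what extracts the extra factor $y^{-1}$ needed for convergence at infinity while producing the correct coefficient $(\tfrac{1}{2}-H)^2$. Once both pieces are controlled, adding them delivers the claimed inequality directly.
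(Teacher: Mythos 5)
Your proof is correct and follows essentially the same route as the paper: split at $y=1$, bound the bracket by $y^{H-1/2}$ near the origin to get the $\frac{1}{2kH}$ term, and use the fundamental-theorem-of-calculus (mean value) estimate $y^{H-1/2}-(1+y)^{H-1/2}\le(\tfrac12-H)y^{H-3/2}$ on the tail to get the $(H-\tfrac12)^2\frac{1}{2-2kH}$ term. No gaps.
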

\begin{proof}
	
	Assume that
	\[
	f(y) = \left[ y^{H - 1/2} - (1 + y)^{H - 1/2} \right]^2 y^{2kH - 2H},
	\]
	
	first consider $ 0\le y \le 1 $. Similar to  Lemma \ref{Alpha_int} we have
	\[
	f(y) \le y^{2H - 1}  y^{2kH - 2H} = y^{2kH - 1},
	\]
	
	and for $1\le y$
	\[
	f(y) \le (H - 1/2)^2 y^{2H - 3}  y^{2kH - 2H} = (H - 1/2)^2 y^{2kH - 3}.
	\]
	
	For $ y \in [0, 1] $, $ (1 + y)^{H - 1/2} \leq 2^{H - 1/2} $, so:
	\[
	\int_0^1 g(y)^2 y^{2kH - 2H} \, dy \leq \int_0^1 y^{2H - 1}  y^{2kH - 2H} \, dy = \int_0^1 y^{2kH - 1} \, dy = \left. \frac{y^{2kH}}{2kH} \right|_0^1 = \frac{1}{2kH}.
	\]
	For $ y \geq 1 $, $ g(y) \le (H - 1/2) y^{H - 3/2} $, we have:
	\[
	g(y)^2 \leq (H - 1/2)^2 y^{2H - 3},
	\]
	and		
	\begin{align*}
		\int_1^\infty g(y)^2 y^{2kH - 2H} \, dy &\leq (H - 1/2)^2 \int_1^\infty y^{2H - 3}  y^{2kH - 2H} \, dy \\
		& = (H - 1/2)^2 \int_1^\infty y^{2kH - 3} \, dy \\
		&=  \left( H - \frac{1}{2} \right)^2  \frac{1}{2 - 2kH}
	\end{align*}

	Therefore after combining two bounds we get
	\[
	\alpha_A' \leq \frac{1}{2kH} + \left( H - \frac{1}{2} \right)^2  \frac{1}{2 - 2kH}.
	\]
	
\end{proof}

However due to Equation (\ref{AlphaOLD}) the value of $\alpha'$ is less than $\alpha$ and since for $\nu'= (2,\dots,2)$ and $c_H < 1$ we have the same procedure we achieve that:

\begin{equation}
	\begin{aligned}
	\mathbb{E}\left[J_{0s}(\nu_k)^2\right]  &\leq \alpha C_I C_F (t-s)^{2kH} \\
	&\le 	\beta_{{k,H}}  \frac{(2c_H)^{k-1}}{H^{k-1}}  \frac{(2c_H^2)^{k-1}}{(k-1)!H^{k-1}} (t-s)^{2kH} \\
	& \le 	\frac{k \beta_{{k,H}}}{k!} \big( \frac{2}{H}\big)^{2k-2} (t-s)^{2kH}
\end{aligned}
\end{equation}

	\begin{corollary}
		
		We have 
		\begin{align*}
			\mathbb{E}(B_{st}^j) \le C_B (t - s)^{2nH },
		\end{align*}
		
		where 
		\[
		C_B = \beta_{j,H} H^{-2j}\frac{2^{2n+2j}}{(n-j)!j!}.
		\]

	\end{corollary}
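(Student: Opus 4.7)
The plan is to exploit the product decomposition $B_{st}^j = C_{st}^j \cdot D_{st}^j$ introduced earlier together with the fact that $C_{st}^j$ and $D_{st}^j$ are independent, since $C_{st}^j$ integrates the driving Wiener process over $[0,s]$ while $D_{st}^j$ integrates it over $[s,t]$, and these intervals are disjoint (so the underlying Wiener increments are independent). Interpreting the quantity in the statement as the second moment (as the exponent $(t-s)^{2nH}$ and the earlier bounds for $C$ and $D$ make clear), independence gives the clean factorisation
\[
\mathbb{E}\bigl[(B_{st}^j)^2\bigr] \;=\; \mathbb{E}\bigl[(C_{st}^j)^2\bigr]\,\mathbb{E}\bigl[(D_{st}^j)^2\bigr].
\]

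First I would substitute the bound on $\mathbb{E}[(D_{st}^j)^2]$ already proved in this section, which contributes a factor of $(t-s)^{2(n-j)H}$ together with the combinatorial constant $\frac{2^{2(n-j)}}{(n-j)!}\bigl(\frac{2c_H^2}{H}\bigr)^{n-j}$. For the $C$-factor, I would expand $C_{st}^j$ via Proposition~\ref{StratonovichIto} into the finite sum $\sum_{k=\lfloor j/2\rfloor}^{j} 2^{k-j}\sum_{\nu\in D_j^k} J_{0s}(\nu)$, apply Minkowski's inequality, and plug in the per-term bound $\mathbb{E}[J_{0s}(\nu)^2] \le \frac{j\beta_{j,H}}{j!}\bigl(\frac{2}{H}\bigr)^{2j-2}(t-s)^{2jH}$ that was just established. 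The combinatorial prefactor is controlled by the elementary estimate $\bigl(\sum_{k} |D_j^k|/2^{j-k}\bigr)^2 \le 2^{2j}$, which was already used for the analogous bound on $D_{st}^j$, yielding
\[
\mathbb{E}\bigl[(C_{st}^j)^2\bigr] \;\le\; 2^{2j}\,\frac{j\,\beta_{j,H}}{j!}\Bigl(\frac{2}{H}\Bigr)^{2j-2}(t-s)^{2jH}.
\]

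Multiplying the two factors immediately produces the correct exponent $(t-s)^{2jH}\cdot(t-s)^{2(n-j)H} = (t-s)^{2nH}$, so the only remaining work is the consolidation of constants into the stated $C_B = \beta_{j,H}\,H^{-2j}\,\frac{2^{2n+2j}}{(n-j)!\,j!}$. This is arithmetic: combine the powers of $2$ from $2^{2j}$, $2^{2j-2}$ and $2^{2(n-j)}$; combine the $H$-powers from $H^{-(2j-2)}$ and $H^{-(n-j)}$; and absorb the $c_H$-factors and the stray $j/j!$ using the crude bounds $c_H \le 1$ (valid for the relevant Hurst range) and $j \le 2^j$, which the authors have systematically used throughout the section.

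The probabilistic content is essentially free once independence of $C_{st}^j$ and $D_{st}^j$ is recognised, so the main obstacle is purely the bookkeeping of constants: each intermediate bound carries $H$-dependent, factorial, and $c_H$-dependent factors, and shoehorning them into the compact form of $C_B$ requires discarding sub-leading terms in a careful but routine way. No new analytic estimates are needed beyond those already proven in the lemmas above.
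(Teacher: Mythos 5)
Your proposal follows essentially the same route as the paper: factorize $\mathbb{E}[(B_{st}^j)^2]=\mathbb{E}[(C_{st}^j)^2]\,\mathbb{E}[(D_{st}^j)^2]$ by independence of the two disjoint-time-interval integrals, insert the previously established bound on $\mathbb{E}[(D_{st}^j)^2]$, control $\mathbb{E}[(C_{st}^j)^2]$ via the decomposition of Proposition~\ref{StratonovichIto} together with the per-term bound on $\mathbb{E}[J_{0s}(\nu)^2]$ and the estimate $\bigl(\sum_{k}\lvert D_j^k\rvert/2^{j-k}\bigr)^2\le 2^{2j}$, and then consolidate constants. You also correctly read the statement as a second-moment bound, which is exactly how the paper's own proof treats it.
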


	\begin{proof}
		We know that 
				\begin{align*}
			\mathbb{E}[(B_{st}^j)^2]& \le \mathbb{E}[(C_{st}^j)^2] \mathbb{E}[(D_{st}^j)^2] \\
			 &\le \left(\frac{2^{2n-2j}}{(n-j)!} \times (\frac{2}{H})^{n-j} \right) (t - s)^{2(n-j)H }  \bigg( \sum_{k=\lfloor j/2 \rfloor}^{j} \frac{1}{2^{j-k}} \sum_{v \in D_j^k} \bigg)^2 \mathbb{E}\left[J_{0s}(\nu_k)^2\right]  \\
			&\le \frac{2^{2n}}{(n-j)!} \times (\frac{2}{H})^{n-j}  \frac{j \beta_{{j,H}}}{j!} \big( \frac{2}{H}\big)^{2j-2}  (t - s)^{2nH } \\
			& \le  \frac{2^{3n}  n}{H^{n}  } \times \frac{2^j \beta_{j,H}}{(n-j)! j! H^j}  (t - s)^{2nH } \\
			& \le  \frac{2^{5n}  n \beta_{n,H}}{H^{2n} n! } (t - s)^{2nH }.
		\end{align*}

	\end{proof}

	Recall that $Q_{st} = Q_{st}^2 + Q_{st}^1$  and 
	
	\begin{equation}\label{Eq:BoundQ1}
		\mathbb{E}\left[(Q_{st}^1)^2\right] \leq \frac{2^{2n}}{n!} \times (\frac{2c_H^2}{H})^{n} (t - s)^{2nH } \le \frac{2^{3n}}{n!H^n} (t - s)^{2nH }.
	\end{equation}
	
	If $Q_{st}^2 = \sum_{j=1}^n B_{st}^j$ then 
we get 
\[ 
\mathbb{E}[(Q_{st}^2)^2] \le  \frac{2^{5n}  n^3}{H^{2n} n! } (t - s)^{2nH },
\]
as a result 
\[ 
\mathbb{E}[(Q_{st})^2] \le 2 \frac{2^{5n}  3}{H^{2n} n! } (t - s)^{2nH }+  + 2 \frac{2^{3n}}{n!H^n} (t - s)^{2nH } =  \frac{2^{3n+1}}{n!H^n} (t - s)^{2nH } \times (1+\frac{2^{2n}n^3 \beta_{n,H}}{H^n}).
\]

	Finally we are in the stage that we can 
	find constant in $\mathbb{E}\left[(Q_{st})^2\right] \leq C_q (t - s)^{2nH } $.
	From above result and and Equation we have (\ref{Eq:BoundQ1})
	
	\[
	C_q = \frac{1}{n!} \frac{2^{3n}}{H^n} \times \left( 1 + n \beta_{n,H}  \left(1 + \frac{2}{H}\right)^n\right),
	\]
	
	where $\beta_{n,H} = \frac{\pi (\frac{1}{2}-H)}{\cos(\pi H)} + \frac{1}{1-2nH} $.
	
	In order to find the overall bound we need Equation (\ref{RoughPathTerms}).
Therefore we get
\begin{align*}
\mathbb{E}\left[\left|B^n\right|^2\right] &= \mathbb{E}\left[\left|\sum_{j=1}^{n-1} \hat{B}^{nj}\right|^2\right] \\
& \le (n-1)^2 2^{n-1} \mathbb{E}(Q_{st}^2) \\
& \le        \frac{2^{4n}}{n!H^n}   (n^2+\frac{2^{2n}n^5 \beta_{n,H}}{H^n}) \times (t - s)^{2nH }
\end{align*}
where the factor $2^{n-1}$, is resulted from the number of shuffle terms when index $j$ changes and $\beta_{n,H} = \frac{\pi (\frac{1}{2}-H)}{\cos(\pi H)} + \frac{1}{1-2nH} $. Note that for large $n$ this upper bound is decaying.

\subsubsection*{Upper bound for the first moment of signature}

In Lemma \ref{SignatureFirstMoment} we want to show that if $n = 2k$ then

	\[
\mathbb{E}[S_{I_n}(\boldsymbol{B}_{t})_{st}]  \le \frac{{\beta_{{k,H}}}}{k!H^k} (t - s)^{nH}
\]
	where
	\begin{equation*}\label{BoundIntA}
			{\beta_{{k,H}}}= \frac{\pi (\frac{1}{2}-H)}{\cos(\pi H)} + \frac{1}{1-2kH} 
		\end{equation*}

\begin{proof}
	
	Applying Proposition \ref{StratonovichIto} signature expectation will be nonzero unless its decomposition includes sequence of lenght $k = n/2$, with all elements equal to 2, i.e. $\nu = (2,\ldots,2)$. Using Lemma \ref{MuEven} and the fact that $c_H < 1$ we have:
	
	\[
	\mathbb{E}|J_{st}(\nu)| \leq \frac{\beta_{{i,H}}}{(i-1)!} (\frac{2}{H})^{i-1} \times  (t-s)^{2kH}
	\]
%

	Therefore applying Equation (\ref{StratonovichItoFormula}) and considering the fact that there is only one nonzero term   we get:
	\[
	\mathbb{E}[S_{I_n}(\boldsymbol{B}_{t})_{st}] = \sum_{i = \lfloor n/2 \rfloor}^{n} \frac{1}{2^{n-i}} 
	\sum_{\nu \in D_n^i} \mathbb{E} (J_{st}(\nu)) \le \frac{{\beta_{{k,H}}}}{k!H^k} (t - s)^{nH}
	\]

\end{proof}

\end{document}